\documentclass[10pt]{article}
\usepackage{amsmath, amsthm}\usepackage{enumerate}
\usepackage{amssymb}\usepackage{bbold}
\usepackage{color}

\newcommand{\rr}{\mathbb R}

\let\text\hbox

\newtheorem{example}{Example}

\newtheorem{proposition}{Proposition}
\newtheorem{lemma}{Lemma}
\newtheorem{corollary}{Corollary}

\newtheorem{definition}{Definition}

\begin{document}

\title{$b$-Property of sublattices in vector lattices}\maketitle
\maketitle\author{\centering{{\c{S}afak Alpay$^{1}$, Svetlana Gorokhova $^{2}$\\ 
\small $1$ Department of Mathematics, Middle East Technical University,  06800 Ankara, Turkey, safak@metu.edu.tr\\ 
\small $2$ Southern Mathematical Institute of the Russian Academy of Sciences, 362027 Vladikavkaz, Russia, lanagor71@gmail.com}
\abstract{We study $b$-property of a sublattice (or an order ideal) $F$ of a vector lattice $E$. In particular, 
$b$-property of $E$ in $E^\delta$, the Dedekind completion of $E$, 
$b$-property of $E$ in $E^u$, the universal completion of $E$, and
$b$-property of $E$ in $\hat{E}(\hat{\tau})$, the completion of $E$.}

{\bf{keywords:}} {\rm  vector lattice, universal completion, Dedekind completion, $b$-property, local solid vector lattice}
\vspace{2mm}

{\bf MSC2020:} {\rm 46A40, 46B42}
\large

\section{Introduction and preliminaries}

Vector lattices considered here are all real and Ar\-chi\-me\-dean. Vector topologies are assumed to be Hausdorff. 

\bigskip

\begin{definition}\label{b-property}{\em
A sublattice  $F$ of a vector lattice $E$ is said to have {\em $b$-property} in $E$, if $x_\alpha$ is  a net in $F^+$ and $0\le x_\alpha\uparrow \le e$
for some $e\in E$, then there exists $f \in F$  with $0\le x_\alpha\uparrow \le f$.}
\end{definition}

\bigskip

Recall that a subset $F$ of $E$ is said to be majorizing in $E$ if, for each $0 < e \in E$, there exists $f\in F$ with $0 \le e \le f$.

A subset $U$ of a vector lattice (VL) is called {\em solid}  if $|u| \le |v|$, $v \in U$, imply $u\in U$. 
A linear topology $\tau$ on a VL $E$ is called {\em locally solid}  if $\tau$ has a base of zero consisting of solid sets.

A locally solid VL $E$ (LSVL) satisfies the {\em Lebesgue property} if $x_\alpha \downarrow 0$ in $E$ implies $x_\alpha \stackrel{\tau}{\to} 0$.

A LSVL $E(\tau)$ satisfies the {\em Fatou property} if $\tau$ has a base of zero  consisting of solid and order closed sets.

A sublattice $F$ in a VL $E$ is {\em regular} if $\inf A$ is the same as in $F$ and $E$ whenever $A\subset F$ whose infimum 
exists in $F$. Ideals are regular in $E$.

$E$ is called {\em $\sigma$-laterally complete}  if the supremum of every disjoint sequence exists in $E^+$ and 
{\em laterally complete}  if supremum of every disjoint subset  in $E^+$ exists in $E$.

\bigskip

\begin{example}\label{step function} {\em \cite[p.198]{AB1} 
Let $X$ be a topological space. A function $f: X \to \rr$ is called a {\em step function} if there exists a collection of mutually disjoint subsets 
$\{V_i\}$ of $X$ such that  $\bigcup_i V_i = X$, $f$ is constant on  each $V_i$, and $f\in C^{\infty}(X)$. 
Let $S^\infty (X)$ be the space of step functions on an extremally disconnected topological space $X$. Then $S^\infty (X)$ is a laterally 
complete VL.} 
\end{example}

\bigskip

Lateral completion $E^\lambda$ of a VL $E$ is defined to be the intersection of all laterally complete vector lattices between 
$E$ and $E^u$.

Universal completion ($\sigma$-universal completion) of a VL $E$ is a laterally ($\sigma$-laterally) and De\-de\-kind complete 
(De\-de\-kind $\sigma$-complete) vector lattice $E^u$ (resp., $E^s$) which contains $E$ as an order dense sublattice. 
Every VL $E$ has a unique universal completion \cite[Theorem 7.21]{AB1}

\bigskip

\begin{example}\label{extended continuous functions} {\em
Let $X$ be an extremally disconnected  topological space. $C^{\infty}(X)$, the space of all extended continuous functions on $X$ 
with the usual algebraic and lattice operations is a universally complete VL. }
\end{example}

\bigskip

A net $(x_\alpha)_{\alpha\in A}$ in a VL $E$ is {\em order convergent} to $x\in E$ if there exists a net $(x_\beta)_{\beta\in B}$,
possibly over a different index set,  such that $x_\beta \downarrow 0$  and, for each $\beta\in B$, there exists  $\alpha_0 \in A$
with $|x_\alpha - x| \le x_\beta$ for all $\alpha \ge \alpha_0$. In this case we write $x_\alpha \stackrel{o}{\to} x$.

A net $x_\alpha$ in  $E$  {\em $uo$-converges} to $x\in E$ if $|x_\alpha - x| \wedge u \stackrel{o}{\to} 0$ for all $u\in E^+$. 
 In this case we write $x_\alpha \stackrel{uo}{\to} x$.

Let $E(\tau)$ be a LSVL. A net $x_\alpha$ in  $E$  is {\em $u\tau$-convergent} to $x\in E$ if $|x_\alpha - x| \wedge u \stackrel{\tau}{\to} 0$ 
for all $u\in E^+$. A net $x_\alpha$ in  $E$  is called  {\em order Cauchy $($$uo$-Cauchy $)$} if the doubly indexed net 
$(x_\alpha, x_{\alpha'})_{(\alpha, \alpha')}$ is order convergent ($uo$-convergent) to zero.  $E(\tau)$ is called {\em $uo$-complete} if
every $uo$-Cauchy net is $uo$-convergent in $E$.

\bigskip

The $b$-property of a VL $E$ was defined in \cite{AAT03} as: a VL $E$ has {\em $b$-property} if every subset $A$ in $E$ which is 
order bounded in $(E^\sim)^\sim$, remains to be order bounded in $E$. Equivalently, a VL $E$ has $b$-property iff
each net $x_\alpha$ in $E$, with $0\le x_\alpha \uparrow x$ for some $x\in (E^\sim)^\sim$, is order bounded in $E$ (\cite{AAT03}).

\bigskip

\begin{example}\label{perfect VL} {\em
Every perfect VL, and therefore every order dual, have the $b$-property. Every reflexive BL and every $KB$-space have $b$-property
\cite{AAT03, AAT06, AEmG, AEr09}. On the other hand, by considering the basis vectors $e_n$ in $c_0$, we see that $c_0$ does not have 
the $b$-property in $l_\infty$.}
\end{example}

\bigskip

Let us note that Fremlin had considered subsets of a VL $E$ that are order bounded in the universal completion $E^u$ of $E$.
He proved that if $E$ is a $\sigma$-Dedekind complete VL then $E$ is $\sigma$-laterally complete iff $E$ has 
the countable $b$-property in $E^u$ \cite[Theorem 7.38]{AB1}.

\bigskip

\begin{example}\label{projection band} {\em
Each projection band $F$ in a vector lattice $E$ has $b$-property in $E$. In particular, 
every band in a Dedekind complete vector lattice has $b$-property. An element $u$ in a VL $E$ is called an {\em atom} if whenever
$v\wedge w =0$, $0 \le v\le u$, and $0\le w\le u$ imply either $v=0$ or $w=0$. If $x$ is an atom in $E$, the principal band $B_x$ 
generated by $x$ is a projection band and therefore has $b$-property in $E$.}
\end{example}

\bigskip

\begin{example}\label{majorizing sublattice} {\em
Every majorizing sublattice $F$ has $b$-property in $E$. Let $0\le x_\alpha\uparrow \le e$ for some net $x_\alpha \subseteq F$, $e\in E$.
As $F$ is majorizing, there exists $f\in F$ with $e\le f$. Then $0 \le x_\alpha \le f$. 
Since it is well-known that $E$ is majoring in $E^\delta$, $E$ has $b$-property in $E^\delta$.}
\end{example}

\bigskip

\begin{example}\label{order ideal} {\em
Every order ideal $F$ in a vector lattice $E$ with $b$-property in $E$ is a band of $E$. Indeed,  let $x_\alpha$ be a net in $F$ such that
$0 \le x_\alpha\uparrow e \in E$, then $x_\alpha$ is $b$-bounded in $E$ and by the $b$-property of $F$, 
there exists $f\in F$ with $0 \le x_\alpha \le f$. As $x_\alpha\uparrow e$, we have $0 \le e \le f$ and as $F$ is an ideal, $e\in F$.}
\end{example}

\bigskip

\begin{example}\label{ideal I(F)} {\em
Let $F \subseteq E$ be a sublattice of $E$ and $I(F)$ be the ideal generated by $F$ in $E$. Then $F$ has $b$-property in $I(F)$.
Having $b$-property is transitive: if $E\subseteq F \subseteq G$ are sublattices of a VL $X$, then $E$ 
has $b$-property in $F$, and if $F$ has $b$-property in $G$,  then $E$ has $b$-property in $G$. 
If  $E$ has $b$-property in $G$,  then $E$ has $b$-property in every sublattice of $G$ containing $E$ as a sublattice.}
\end{example}

\bigskip

\begin{example}\label{countable b-prop} {\em
Let $(E, \| . \|)$ be a Banach lattice with order continuous norm and  $F \subseteq E$ be a norm-closed sublattice.
Let $x_n$ be a $b$-bounded sequence in $F$ such that $0 \le x_n \le e$ for some $e\in E$. Then $x_n$ is norm-Cauchy 
and is convergent to some $x\in E$. As $F$ is norm-closed, $x\in F$ and consequently $x_n \uparrow x$. 
That is to say $F$ has countable $b$-property in $E$. 
Order continuity of the ambient space is essential in this example, if one takes $E= l^\infty$ and $F=c_0$, 
Then by considering the sequence $e_n$ in $c_0$, we see that $c_0$ has no $b$-property in $l^\infty$. }
\end{example}

\bigskip

\begin{example}\label{general countable b-prop} {\em
Generalizing Example \ref{countable b-prop}, let $E(\tau)$ be LSVL with Lebesgue property.
Then every $\tau$-closed order ideal has $b$-property in $E(\tau)$. This is because every $\tau$-closed ideal 
is a band and, as  $E(\tau)$ is Dedekind complete, it is a projection band.}
\end{example}

\bigskip

\begin{example}\label{lateral completion} {\em
Given a LSVL $E(\tau)$, let us denote by $E^\lambda$ its lateral completion and $E^u$ its universal completion. 
Then the equality $(E^\lambda)^\delta = (E^\delta)^\lambda = E^u$ (see \cite[Exer.10 on p.213]{AB1}) 
shows that each laterally complete $E(\tau)$ has $b$-property in its universal completion.}
\end{example}

\bigskip

\begin{example}\label{lateral complete LSVL} {\em
If $E(\tau)$ is a laterally complete LSVL, then it has the projection property and every band on $E$ has $b$-property.
Furthermore, a subset $A \subset E^+$ of a laterally complete VL $E$ is order bounded in $E^u$ iff
it is order bounded in $E$ by \cite[Theorem~7.14]{AB1}}
\end{example}

\bigskip

Let us observe that all Lebesgue topologies on a LSVL $E(\tau)$ induce the same topology on order bounded subsets of $E$.
Therefore, if $F$ is a sublattice of $E$ then on all subsets of $F$ with $b$-property in $E$ 
all Lebesgue topologies on $E$ induce the same topology.

\bigskip

\begin{example}\label{order dense} {\em
Let $F$ be an order dense sublattice of a vector lattice $E$. If $F$ is laterally complete in its own right, 
then $F$ majorizes $E$ and therefore has $b$-property in $E$.}
\end{example}

\bigskip

We refer to \cite{AB1, MN} for all undefined terms.

\section{Main results}

\begin{lemma}\label{b-bounded subset is t-bounded}
Let $F$ be a sublattice of a LSVL $E(\tau)$. Then each $b$-bounded subset $B$ of $F$ is 
$\tau$-bounded subset with respect to induced topology on $F$.
\end{lemma}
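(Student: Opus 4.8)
The plan is to reduce the claim to two elementary facts: that every order bounded subset of a locally solid vector lattice is topologically bounded, and that topological boundedness passes to linear subspaces carrying the induced topology.

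To begin, I would unwind what it means for $B$ to be a $b$-bounded subset of $F$: it means precisely that $B$ is order bounded \emph{in the ambient space} $E$, so one may fix $e\in E^+$ with $B\subseteq[-e,e]$. The key step is then to check that the order interval $[-e,e]$ is $\tau$-bounded in $E(\tau)$. For this, let $V$ be any $\tau$-neighbourhood of $0$ and, using that $\tau$ is locally solid, choose a solid $\tau$-neighbourhood $W\subseteq V$ of $0$. Since scalar multiplication is $\tau$-continuous, $\tfrac1n e\in W$ for all sufficiently large $n$; for any $x\in[-e,e]$ we then have $\bigl|\tfrac1n x\bigr|\le\tfrac1n e$, hence $\tfrac1n x\in W$ by solidity of $W$, i.e.\ $[-e,e]\subseteq nW\subseteq nV$. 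Thus every $\tau$-neighbourhood of $0$ absorbs $[-e,e]$, so $[-e,e]$, and with it $B$, is $\tau$-bounded in $E$; this is the standard fact that order bounded sets in a locally solid vector lattice are topologically bounded.

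It remains to transfer this to $F$ with the induced topology. A basic neighbourhood of $0$ for $\tau|_F$ is of the form $V\cap F$ with $V$ a $\tau$-neighbourhood of $0$ in $E$; picking $\lambda>0$ with $B\subseteq\lambda V$ and using $B\subseteq F$ (and $\lambda F=F$), we get $B\subseteq\lambda V\cap F=\lambda(V\cap F)$, so $B$ is $\tau|_F$-bounded in $F$, which is the assertion. I do not expect any genuine obstacle here: the only non-formal ingredient is the solidity-plus-scalar-continuity argument bounding the order interval, and the rest is bookkeeping with the subspace topology. The one point that must be stated carefully is the meaning of ``$b$-bounded subset of $F$'' — that the order bound is required to live in $E$, not in some larger completion — since this is exactly what makes the reduction go through.
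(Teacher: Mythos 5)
Your proof is correct and follows essentially the same route as the paper's: interpret $b$-boundedness of $B$ as order boundedness in $E$, conclude $\tau$-boundedness in $E$, and pass to the induced topology via $B\subseteq\lambda U\cap F=\lambda(U\cap F)$. The only difference is that you spell out the standard fact that order intervals are $\tau$-bounded in a locally solid vector lattice (via solid neighbourhoods and continuity of scalar multiplication), which the paper simply invokes implicitly.
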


\begin{proof}\
To say that $B$ is $b$-bounded is to say that $B$ is order bounded in $E$. So, if $U$ is a neighborhood of $0$ in $\tau$ then $B\subseteq \lambda U$
for some $\lambda > 0$. Then $B \subseteq \lambda U \cap F = \lambda(U \cap F)$.
\end{proof}

\bigskip

\begin{lemma}\label{order dense subl}
Let $E$ be  a vector lattice and $F$ be an order dense sublattice of $E$. Then TFAE:
\begin{enumerate}
\item[i$)$]  $F$ has $b$-property in $E$;
\item[ii$)$]  $F$ is majorizing in $E$.
\end{enumerate}
\end{lemma}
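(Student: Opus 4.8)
The plan is to prove the two implications separately; the content lies in i)$\Rightarrow$ii), while ii)$\Rightarrow$i) is immediate. For ii)$\Rightarrow$i): this is exactly the argument recorded in Example~\ref{majorizing sublattice}. Given a net with $0\le x_\alpha\uparrow\le e$ and $x_\alpha\in F^+$, majorization of $E$ by $F$ provides $f\in F$ with $e\le f$, whence $0\le x_\alpha\le f$. Note this half uses neither order density nor, in any essential way, the sublattice structure.

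For i)$\Rightarrow$ii), I would fix $0<e\in E$ and set $S=\{f\in F:0\le f\le e\}$. Since $F$ is a sublattice, $S$ is upward directed, and it is nonempty because $0\in S$; so $S$, regarded as a net over itself, is an increasing net in $F^+$ bounded above by $e\in E$. The $b$-property of $F$ in $E$ then supplies $f_0\in F$ that is an upper bound of $S$ in $F$, hence in $E$. It remains only to verify $e\le f_0$, and for this it suffices to show that $e=\sup S$ in $E$: then $e=\sup S\le f_0$ and $F$ majorizes $E$.

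The crux is therefore the (standard) characterization that order density of $F$ in $E$ forces $e=\sup\{f\in F:0\le f\le e\}$ for every $e\in E^+$, which I would establish as follows. Certainly $e$ is an upper bound of $S$. Let $w$ be any upper bound of $S$ in $E$; since $0\in S$ we have $w\ge 0$, and replacing $w$ by $w\wedge e$ (still an upper bound of $S$, since every $g\in S$ satisfies $g\le w$ and $g\le e$) we may assume $0\le w\le e$. If $w\ne e$, then $e-w>0$, so by order density there is $g\in F$ with $0<g\le e-w\le e$; thus $g\in S$ and hence $g\le w$. Now induction gives $ng\le w$ for every $n\ge 1$: indeed, if $ng\le w$ then $(n+1)g=ng+g\le w+(e-w)=e$, so $(n+1)g\in S$ and therefore $(n+1)g\le w$. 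Since $g>0$, the relation $ng\le w$ for all $n$ contradicts the Archimedean property of $E$. Hence $w=e$, so $e=\sup S$, and the proof is complete.

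The only mild subtlety — the step I would watch most carefully — is the induction in the last paragraph: to keep the iterates $ng$ inside $S$ one must combine the already-established bound $ng\le w$ with the bound $g\le e-w$, rather than iterating a single inequality. Once this is noticed the rest is routine, and the Archimedean hypothesis standing throughout the paper closes the argument.
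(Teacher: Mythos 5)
Your proof is correct and follows essentially the same route as the paper's: for i)$\Rightarrow$ii) both take the increasing net of elements of $F$ lying below a given $e\in E^+$ (which increases to $e$ by order density) and apply the $b$-property to cap it by some $f_0\in F$, while ii)$\Rightarrow$i) is the same one-line majorization argument. The only difference is that you prove from scratch, via the Archimedean induction, the standard fact that order density forces $e=\sup\{f\in F:0\le f\le e\}$, which the paper simply invokes when it asserts the existence of a net $x_\alpha$ in $F$ with $0\le x_\alpha\uparrow x$.
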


\begin{proof}\
$i)\Longrightarrow ii)$: \ 
Let $0\le x \in E$ be arbitrary, as $F$ is order dense in $E$, there exists a net $x_\alpha$  in $F$ such that $0\le x_\alpha \uparrow x$. 
As $x_\alpha$ is $b$-bounded by assumption, there exists $x_0 \in F^+$ with $0\le x_\alpha \le x_0$ for all $\alpha$, as  $x_\alpha \uparrow x$,
we have $x\le x_0$ and $F$ is majorizing.

$ii)\Longrightarrow i)$: \ 
Let $x_\alpha$ be a net in $F$ with $0\le x_\alpha \uparrow \le x$ for some $x\in E$. Since $F$ is assumed to be majorizing $E$, 
there exists $y\in F$ with $x\le y$. Consequently, $0\le x_\alpha \le y$ and $F$ has $b$-property in $E$.
\end{proof}

This yields: $E$ has $b$-property in $E^u$ iff $E$ is majorizing in $E^u$. We also have if $E(\tau)$ is a LSVL where $E$ is an ideal of 
$\hat{E}(\hat{\tau})$, where $\hat{E}$ is the completion. Then $E$ has $b$-property in  $\hat{E}(\hat{\tau})$.

On the other hand, if $E(\tau)$ is a LSVL with Fatou property, then every increasing $\tau$-bounded net of $E^+$ is order bounded in $E^u$ i.e. every 
increasing  $\tau$-bounded net of $E^+$ is $b$-bounded in $E^u$ by \cite[Theorem 7.51]{AB1}

The following property was introduced in \cite{La84} and \cite{La85}.

\bigskip

\begin{definition}\label{BOB} {\em
A locally solid vector lattice $E(\tau)$ is called {\em boundedly order bounded (BOB)} if every $\tau$-bounded net in $E^+$ is order bounded in $E$. }
\end{definition}

\bigskip

We show BOB is equivalent to $b$-property if the LSVL $E(\tau)$  has Fatou property.

\bigskip

\begin{lemma}\label{BOB=b}
Let $E(\tau)$  be a LSVL with Fatou property. Then $E$ has $b$-property in $E^u$ iff $E$ is BOB.
\end{lemma}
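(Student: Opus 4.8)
The plan is to establish both implications, using Lemma \ref{order dense subl} (which identifies $b$-property in $E^u$ with $E$ being majorizing in $E^u$) together with the Fatou-property fact, recalled just before Definition \ref{BOB}, that every increasing $\tau$-bounded net in $E^+$ is order bounded in $E^u$ (\cite[Theorem 7.51]{AB1}).

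For the direction ``$E$ BOB $\Longrightarrow$ $E$ has $b$-property in $E^u$'': by Lemma \ref{order dense subl} it suffices to show $E$ is majorizing in $E^u$, i.e. that an arbitrary $0\le w\in E^u$ has an upper bound in $E$. Since $E$ is order dense in its universal completion, pick a net $x_\alpha$ in $E$ with $0\le x_\alpha\uparrow w$ in $E^u$. This net is increasing and order bounded in $E^u$, hence in particular order bounded in $E^u$; but more to the point, I first need to know it is $\tau$-bounded in $E$. This follows because an order bounded subset of $E^u$ that lies in $E$ is order bounded in $E$? — no, that is exactly what we are trying to prove, so instead I use that $x_\alpha\uparrow$ and apply the Fatou direction differently: the net $x_\alpha$, being increasing with supremum $w$ in $E^u$, is a net in $E^+$; its $\tau$-boundedness will have to come from a separate argument. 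The clean route is: since $x_\alpha\uparrow w$ in $E^u$ and $E^u$ carries a (unique up to the induced topology on order bounded sets) Lebesgue topology extending $\tau$, the net $x_\alpha$ is $\tau$-Cauchy hence $\tau$-bounded; alternatively, invoke that a $uo$-convergent increasing net is $\tau$-bounded under Fatou. Once $x_\alpha$ is $\tau$-bounded in $E^+$, the BOB hypothesis gives $f\in E$ with $0\le x_\alpha\le f$ for all $\alpha$, and then $w=\sup_\alpha x_\alpha\le f$, so $E$ majorizes $E^u$.

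For the converse ``$E$ has $b$-property in $E^u$ $\Longrightarrow$ $E$ BOB'': let $(x_\alpha)$ be a $\tau$-bounded net in $E^+$; I must produce an order bound in $E$. Replacing $x_\alpha$ by the increasing net $y_\beta=\bigvee_{\alpha\in\beta}x_\alpha$ indexed by finite subsets $\beta$ of the index set — which is again $\tau$-bounded because $\tau$ is locally solid and has a base of solid $\tau$-closed (Fatou) neighborhoods, so finite suprema stay in the same multiple of each such neighborhood, and $\tau$-closedness lets the bound pass to the increasing net's tail — I reduce to an increasing $\tau$-bounded net in $E^+$. Wait: the suprema $y_\beta$ need not exist in $E$, so I should instead work in $E^u$ (which is Dedekind complete): there $y_\beta=\bigvee_{\alpha\in\beta}x_\alpha$ exists, $y_\beta\uparrow$, and $\{y_\beta\}$ is $\tau$-bounded in $E$ as a set. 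By \cite[Theorem 7.51]{AB1} the increasing $\tau$-bounded net $(y_\beta)$ is order bounded in $E^u$, say $0\le y_\beta\uparrow\le u\in E^u$. Now $b$-property of $E$ in $E^u$ applies to the net $(y_\beta)\subseteq E$ bounded above by $u\in E^u$, yielding $f\in E$ with $0\le y_\beta\le f$ for all $\beta$, hence $0\le x_\alpha\le f$ for all $\alpha$; so $E$ is BOB.

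The main obstacle is the $\tau$-boundedness bookkeeping in the second implication — passing from the original net to an increasing net while retaining $\tau$-boundedness — and, in the first implication, justifying that the approximating net $x_\alpha\uparrow w$ is $\tau$-bounded in $E$. Both are handled by the Fatou property: its solid order-closed base of neighborhoods is precisely what makes finite suprema and increasing limits of a $\tau$-bounded family remain $\tau$-bounded, and it is exactly the hypothesis under which \cite[Theorem 7.51]{AB1} guarantees order boundedness in $E^u$ of increasing $\tau$-bounded nets. The rest is then a direct application of Lemma \ref{order dense subl} and the definition of $b$-property.
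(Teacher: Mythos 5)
Your backward direction ($b$-property of $E$ in $E^u$ implies BOB) is, in substance, the paper's: an increasing $\tau$-bounded net in $E^+$ is order bounded in $E^u$ by the Fatou theorem in \cite{AB1}, and the $b$-property then yields a bound in $E$. But your preliminary reduction from an arbitrary $\tau$-bounded net to an increasing one breaks at the step ``finite suprema stay in the same multiple of each solid neighborhood.'' Solidity controls $|u|\le |v|$, not suprema of many elements: in $E=L^1[0,1]$ the functions $g_n=n(n+1)\chi_{(1/(n+1),\,1/n]}$ all have norm $1$, while $\bigvee_{n\le N}g_n=\sum_{n\le N}g_n$ has norm $N$, so the family $\{y_\beta\}$ you build need not be $\tau$-bounded. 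The paper avoids this issue because its proof (and the discussion preceding Definition~\ref{BOB}, and Labuda's original property) treats BOB as a condition on \emph{increasing} $\tau$-bounded nets, despite the literal wording of the definition; under that reading your reduction is unnecessary and the remainder of your argument for this direction coincides with the paper's.

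The forward direction is where the genuine gap sits, and you located it exactly but did not close it: one must show that a net $x_\alpha$ in $E^+$ with $x_\alpha\uparrow w$ in $E^u$ is $\tau$-bounded in $E$ before BOB can be invoked. Neither of your proposed fixes works. A Fatou topology need not extend to a locally solid topology on $E^u$ (the extension theorems in \cite{AB1} require the Lebesgue property), and an increasing net that is order bounded in $E^u$ need not be $\tau$-bounded in $E$: take $E=\ell^\infty$ with the sup norm (Fatou, and BOB since the unit ball is the order interval $[-e,e]$ with $e=(1,1,1,\dots)$); the sequence $x_n=(1,2,\dots,n,0,0,\dots)$ increases to $(1,2,3,\dots)$ in $E^u=\rr^{\nn}$ yet has $\|x_n\|_\infty=n$. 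The paper's own proof dispatches this step by citing Lemma~\ref{b-bounded subset is t-bounded}, but that lemma presupposes a locally solid topology on the ambient lattice ($E^u$ here) inducing $\tau$ on $E$, which is precisely what is unavailable; indeed the $\ell^\infty$ example satisfies all hypotheses of Lemma~\ref{BOB=b} and is BOB while failing to majorize $\rr^{\nn}$, so the obstacle you ran into is not one of presentation. In short: your backward direction is correct once the incorrect (and unneeded) finite-suprema reduction is removed, and your forward direction is incomplete at the very point where the paper's one-line justification is also the weak point of the statement.
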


\begin{proof}\
Suppose $E$ is BOB and $x_\alpha$ be a net in $E$ with $0\le x_\alpha \uparrow \le x_0$ for some $x_0 \in E^u$. 
Then, by Lemma~\ref{b-bounded subset is t-bounded}, $x_\alpha$ is $\tau$-bounded in $E$ and, 
by assumption that $E$ is BOB,  $0\le x_\alpha \le x$ for some $x\in E$.

Conversely, suppose $x_\alpha$ is $\tau$-bounded increasing net in $E^+$, then by \cite[Theorem 7.50]{AB1}, 
$x_\alpha$ is order bounded in $E^u$. Thus by $b$-property of $E$ in $E^u$, there exists $x\in E$ with 
$0\le x_\alpha \le x$ and  $E(\tau)$ is BOB.
\end{proof}

\bigskip

\cite[Theorem 7.49]{AB1} shows that, in a laterally $\sigma$-complete LSVL  $E(\tau)$, every disjoint sequence in $E^+$ converges to zero 
with respect to any LS topology on $E$. We show a similar result. The proof is similar.

\bigskip

\begin{proposition}
Let $E(\tau)$  be a LSVL which has countable $b$-property in its $\sigma$-lateral completion. Then every disjoint sequence in $E^+$
converges to zero with respect to any locally solid topology on $E$. In particular, every locally solid topology on $E$ has the pre-Lebesgue property.
\end{proposition}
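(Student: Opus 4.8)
The plan is to mimic the proof of \cite[Theorem 7.49]{AB1}, replacing the use of $\sigma$-lateral completeness of $E$ itself by the hypothesis that $E$ has countable $b$-property in its $\sigma$-lateral completion $E^s$. First I would fix a disjoint sequence $(x_n)$ in $E^+$ and a locally solid topology $\tau$ on $E$, and argue by contradiction: if $x_n \not\xrightarrow{\tau} 0$, then after passing to a subsequence there is a solid $\tau$-neighborhood $U$ of zero with $x_n \notin U$ for all $n$. The key reduction is to show that the sequence $(x_n)$ is order bounded in $E$. In the $\sigma$-lateral completion $E^s$ the disjoint sequence $(x_n)$ has a supremum $s = \sup_n x_n \in (E^s)^+$, so the partial sums $y_n = \bigvee_{k\le n} x_k = \sum_{k\le n} x_k$ form an increasing sequence in $E^+$ with $0 \le y_n \uparrow \le s$ in $E^s$. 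By the countable $b$-property of $E$ in $E^s$ there is $u \in E^+$ with $y_n \le u$ for all $n$; in particular $0 \le x_n \le u$ for every $n$.

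Now the sequence lives inside the order interval $[0,u]$ of $E$, and here I would invoke the standard fact (the same mechanism as in \cite[Theorem 7.49]{AB1} and Rosenthal-type disjointness arguments for locally solid topologies): a disjoint sequence that is order bounded in a locally solid vector lattice must be $\tau$-null. Concretely, one uses that for a disjoint sequence the "diagonal" sums behave well — given any subsequence, the element $\sum_{k} x_{n_k}$ (or rather the order-bounded partial sums) stays in $[0,u]$, and solidity of $U$ together with the fact that $U$ is absorbing forces $x_n \in U$ eventually; otherwise one builds, by a gliding-hump selection, a subsequence along which the $x_{n_k}$ remain uniformly bounded away from $U$ while their disjointness makes arbitrarily long finite sums also order bounded by $u$, contradicting that $U$ absorbs $[0,u]$. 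This yields $x_n \xrightarrow{\tau} 0$, contradicting the choice of $U$. For the final sentence, recall that a locally solid vector lattice has the pre-Lebesgue property precisely when every disjoint sequence in $E^+$ is $\tau$-null (this is one of the standard characterizations, see \cite{AB1}); since we have just shown this for an arbitrary locally solid topology on $E$, every such topology has the pre-Lebesgue property.

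The main obstacle I anticipate is the second step: making the passage from "order bounded disjoint sequence" to "$\tau$-null" fully rigorous for an \emph{arbitrary} locally solid topology, since one cannot use sequential or metrizability arguments. The cleanest route is to quote the relevant implication from the Aliprantis–Burkinshaw machinery directly — essentially the statement that in any locally solid vector lattice an order bounded disjoint sequence converges to zero — rather than reprove it; the paper already signals "the proof is similar" to \cite[Theorem 7.49]{AB1}, so invoking that theorem's internal disjointness lemma applied inside the ideal generated by $u$ in $E$ is legitimate. A secondary point to handle carefully is the existence of $\sup_n x_n$ in $E^s$: this is exactly what $\sigma$-lateral completeness of $E^s$ provides, so one must make sure $E^s$ is genuinely $\sigma$-laterally complete (which it is, by definition of the $\sigma$-lateral completion) before applying the countable $b$-property.
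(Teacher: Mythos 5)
There is a genuine gap in the second half of your argument. After establishing that $(x_n)$ is order bounded in $E$ by some $u\in E^+$, you invoke as a ``standard fact'' that an order bounded disjoint sequence in a locally solid vector lattice must be $\tau$-null. That statement is false in general: it is precisely (one of the standard characterizations of) the pre-Lebesgue property, i.e.\ exactly the conclusion you are trying to prove, so quoting it is circular. A concrete counterexample is $\ell^\infty$ with the sup-norm topology: the standard unit vectors $e_n$ form a disjoint sequence in $[0,\mathbb{1}]$ that does not converge to zero. Your proposed gliding-hump repair also cannot work, because the finite sums $\sum_{k\le m}x_{n_k}$ staying in $[0,u]$ is perfectly compatible with each $x_{n_k}$ lying outside a fixed solid neighborhood $U$ (again, $\ell^\infty$), and absorbency of $U$ on $[0,u]$ yields no contradiction. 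In short, order boundedness of $(x_n)$ by a single $u$ is too weak a conclusion to extract $\tau$-convergence for an arbitrary locally solid topology.

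The missing idea --- and the actual content of \cite[Theorem 7.49]{AB1} that you set out to mimic --- is a scaling trick. Since $(x_n)$ is disjoint, so is $(nx_n)$, hence $x=\bigvee_{n}nx_n$ exists in the $\sigma$-lateral completion $E^s$. The increasing sequence $y_n=\bigvee_{k\le n}kx_k$ in $E^+$ satisfies $0\le y_n\uparrow\le x$ in $E^s$, so the countable $b$-property of $E$ in $E^s$ produces $e\in E^+$ with $nx_n\le e$, i.e.\ $0\le x_n\le\frac{1}{n}e$ for all $n$. Now $\frac{1}{n}e\to 0$ in \emph{any} linear topology by continuity of scalar multiplication, and solidity of the neighborhood base transfers this to $x_n$. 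This is the paper's proof; your first step (applying the $b$-property to the unscaled partial suprema) is correct in form but bounds the wrong sequence, and no amount of work with a uniform bound $u$ on the $x_n$ themselves will close the gap.
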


\begin{proof}\
Let  $x_n$ be a disjoint sequence in $E^+$. Then $n x_n$ is also a disjoint sequence in $E^+$. Then 
$x= \bigvee_{n=1}^{\infty}nx_n$ exists in the  $\sigma$-lateral completion, and we have $0 \le x_n \le \frac{1}{n}x$ for all $n$.
Countable $b$-property of $E$ in its lateral completion yields a vector $e\in E$ with $0 \le x_n \le \frac{1}{n}e$ for all $n$.
Thus $x_n$ converges to zero with respect to any locally solid topology on $E$.
\end{proof}

\bigskip

\begin{corollary}
Let $E(\tau)$  be a LSVL with Lebesgue property. If $E$ has countable $b$-property in its $\sigma$-lateral completion then the topological 
completion $\hat{E}$ of $E(\tau)$ is $E^u$.
\end{corollary}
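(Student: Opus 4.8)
The plan is to show that $\hat E$ is universally complete. Once that is known the corollary finishes at once: a universally complete vector lattice is its own universal completion, so $\hat E=(\hat E)^u$; and since $E$ is order dense in $\hat E$, also $(\hat E)^u=E^u$, whence $\hat E=E^u$. The preliminary facts are routine completion theory: as $E(\tau)$ has the Lebesgue property it has the pre-Lebesgue property, and hence by the standard completion theorems for locally solid vector lattices (see \cite{AB1}) the completion $\hat E(\hat\tau)$ again has the Lebesgue property, $\hat E$ is Dedekind complete, and $E$ is order dense in $\hat E$. Since $\hat E$ is Dedekind complete, proving that $\hat E$ is laterally complete is the same as proving that every disjoint subset of $\hat E^+$ is order bounded in $\hat E$ (an order bound then has a supremum, which is the supremum of the family).

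Next I would transfer the hypothesis to $\hat E$ and run the mechanism of the Proposition above at the level of $\hat E$. For a disjoint sequence $(h_n)$ in $\hat E^+$: the $\sigma$-lateral completion of $\hat E$ contains that of $E$ (both sit inside the common universal completion), so $g:=\bigvee_n n h_n$ exists in it and $0\le h_n\le\frac1n g$ for every $n$. Using order density of $E$ in $\hat E$ (and that $\hat E$, again being Lebesgue, also meets the hypothesis of the Proposition) one checks that $\hat E$ has the countable $b$-property in its $\sigma$-lateral completion, so there is $e\in\hat E$ with $0\le h_n\le\frac1n e\le e$ for all $n$. Thus $(h_n)$ is order bounded in $\hat E$, and, $\hat E$ being Dedekind complete, $\bigvee_n h_n\in\hat E$. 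So $\hat E$ is $\sigma$-laterally complete, and in particular, by the Proposition, every disjoint sequence in $\hat E^+$ is $\hat\tau$-null.

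It remains to pass from disjoint sequences to arbitrary disjoint families $\{h_i\}_{i\in I}$ in $\hat E^+$, and this is the step I expect to carry the real content. From ``every disjoint sequence in $\hat E^+$ is $\hat\tau$-null'' one gets that for each solid $\hat\tau$-neighbourhood $U$ of $0$ the set $\{i:h_i\notin U\}$ is finite, and from the previous step every countable subfamily of $\{h_i\}$ is order bounded in $\hat E$. Using the pre-Lebesgue property of $\hat\tau$ in the form ``every increasing order bounded sequence is $\hat\tau$-Cauchy'', one argues that the increasing net $s_F:=\bigvee_{i\in F}h_i$ (indexed by finite $F\subseteq I$) is $\hat\tau$-Cauchy; completeness of $(\hat E,\hat\tau)$ then yields $s_F\to h$ for some $h\in\hat E$, and since $s_F\uparrow$ and the positive cone is $\hat\tau$-closed, $h=\bigvee_{i\in I}h_i$. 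Hence $\hat E$ is laterally complete, and therefore $\hat E=E^u$.

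The genuine obstacle is this last step: controlling the finite suprema $\bigvee_{i\in J}h_i$ for index sets $J$ lying outside a finite exceptional set — in effect, ``a finite supremum of $\hat\tau$-small disjoint positive elements is $\hat\tau$-small''. The combinatorics of a single disjoint sequence extracted in the Proposition is not enough for this; what is needed is the interplay of topological completeness with the Lebesgue (equivalently Fatou) property of $\hat E$, and the argument is patterned on the proof of \cite[Theorem~7.49]{AB1}. If one prefers, the clean shortcut is to quote a result to the effect that a Dedekind complete, $\sigma$-laterally complete locally solid vector lattice with the Lebesgue property is laterally complete; granting that, the corollary drops out of the $\sigma$-laterally complete conclusion of the second step.
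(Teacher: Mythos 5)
Your opening move is the same as the paper's: the preceding Proposition (the $\bigvee_n nx_n$ trick with the countable $b$-property) shows that every disjoint sequence in $E^+$ is $\tau$-null, and from there one wants to conclude $\hat{E}=E^u$. But at that point the paper simply invokes \cite[Theorem 7.51]{AB1}, which is exactly the statement that for a Hausdorff locally solid vector lattice with the Lebesgue property in which every disjoint sequence of positive elements is $\tau$-convergent to zero, the topological completion is the universal completion. What you are doing is attempting to reprove that cited theorem, and you do not finish: the decisive step --- passing from ``disjoint sequences are null / countable disjoint families are order bounded'' to lateral completeness of $\hat{E}$ for \emph{arbitrary} disjoint families --- is left as an acknowledged ``genuine obstacle,'' with only a sketch that the net of finite suprema $s_F$ is $\hat{\tau}$-Cauchy. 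That Cauchyness is not a consequence of the pre-Lebesgue property as you state it (which controls order bounded increasing \emph{sequences}, whereas $s_F$ is an a priori unbounded net over finite subsets of a possibly uncountable index set); making it work is precisely the content of the Aliprantis--Burkinshaw proof you would be reproducing.

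Two further cautions. First, your proposed ``clean shortcut'' --- that a Dedekind complete, $\sigma$-laterally complete locally solid vector lattice with the Lebesgue property is laterally complete --- is false as stated: the space of real functions on an uncountable set with countable support, under the topology of pointwise convergence, is Dedekind complete, laterally $\sigma$-complete and Lebesgue, but not laterally complete. Topological completeness of $\hat{E}(\hat{\tau})$ is indispensable and cannot be dropped from whatever result you quote. Second, the intermediate claim that $\hat{E}$ inherits the countable $b$-property in its $\sigma$-lateral completion is only asserted (``one checks''); the hypothesis is about sequences in $E$, and transferring it to disjoint sequences in $\hat{E}^+$ needs an actual argument via order density. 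None of these steps is needed if one argues as the paper does: disjoint sequences in $E^+$ are $\tau$-null by the Proposition, and \cite[Theorem 7.51]{AB1} then gives $\hat{E}=E^u$ directly.
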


\begin{proof}\
It follows from \cite[Theorem 7.51]{AB1}. 
\end{proof}

\bigskip

\begin{proposition}
A laterally complete vector lattice $E$ has $b$-property in every vector lattice which contains $E$ as an order dense sublattice.
\end{proposition}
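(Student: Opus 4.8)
The plan is to reduce the statement to facts about $E^u$ and $E^\delta$ that are already available. Write $G$ for a vector lattice containing $E$ as an order dense sublattice. By Lemma~\ref{order dense subl} it is enough to show that $E$ is majorizing in $G$ (this is exactly what Example~\ref{order dense} asserts without proof, so this Proposition is the place to supply the argument). The strategy is first to realise $G$ as a sublattice of $E^u$ lying between $E$ and $E^u$, and then to exploit lateral completeness of $E$ to conclude.

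First I would check that $G$ can be regarded as a sublattice of $E^u$ with $E\subseteq G\subseteq E^u$. Pass to $G^u$, the universal completion of $G$. Since $E$ is order dense in $G$, $G$ is order dense in $G^u$, and order density is transitive (given $0<h\in G^u$, pick $0<g\in G$ with $g\le h$, then $0<e\in E$ with $e\le g\le h$), it follows that $E$ is order dense in $G^u$. Thus $G^u$ is a universally complete vector lattice containing $E$ as an order dense sublattice, so by uniqueness of the universal completion \cite[Theorem~7.21]{AB1} there is a lattice isomorphism $G^u\to E^u$ which is the identity on $E$; identifying along it yields $E\subseteq G\subseteq E^u$.

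The laterally complete hypothesis then finishes the proof quickly. Being laterally complete, $E$ coincides with its lateral completion, $E=E^\lambda$, so from $(E^\lambda)^\delta=E^u$ (the identity used in Example~\ref{lateral completion}, \cite[Exer.~10 on p.~213]{AB1}) we get $E^\delta=E^u$. Every vector lattice is majorizing in its Dedekind completion (as recorded in Example~\ref{majorizing sublattice}), hence $E$ is majorizing in $E^\delta=E^u$, and a fortiori $E$ is majorizing in the intermediate sublattice $G$. Example~\ref{majorizing sublattice} (equivalently Lemma~\ref{order dense subl}) now gives the $b$-property of $E$ in $G$. One could equally cite Example~\ref{ideal I(F)}: once $E\subseteq G\subseteq E^u$ is known and $E$ has $b$-property in $E^u$, it has $b$-property in every sublattice of $E^u$ containing $E$.

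The step I expect to be the crux is the first one: seeing that an \emph{arbitrary} order dense extension $G$ of $E$ embeds over $E$ into $E^u$. Here one must be careful to invoke the uniqueness of the universal completion in its correct form (``unique as an extension of $E$'') and to use transitivity of order density cleanly; the rest is bookkeeping with the quoted results. I would deliberately avoid the alternative ``internal'' route in which, for a fixed $0<g\in G$, one takes a maximal disjoint family in $E^+$ of elements $\le g$ and forms its supremum in $E$ (which exists by lateral completeness): this produces an $e\in E$ with $e\le g$ meeting every nonzero fragment of $g$, but not an $e\ge g$, so it does not by itself establish the majorizing property, whereas the reduction to $E^u$ does.
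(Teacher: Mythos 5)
Your proof is correct, and it reaches the same intermediate goal as the paper --- showing that $E$ majorizes $G$ and then invoking the equivalence of Lemma~\ref{order dense subl} (or Example~\ref{majorizing sublattice}) --- but it gets there by a genuinely different route. The paper's proof is a one-line citation of \cite[Theorem~7.15]{AB1}, which states directly that a laterally complete order dense sublattice of an Archimedean vector lattice is majorizing. You instead re-derive that fact: you use transitivity of order density and the uniqueness of the universal completion (\cite[Theorem~7.21]{AB1}) to realise $G$ between $E$ and $E^u$, then observe that lateral completeness gives $E=E^\lambda$, hence $E^\delta=(E^\lambda)^\delta=E^u$ by the identity quoted in Example~\ref{lateral completion}, so that $E$ majorizes $E^u$ (being majorizing in its own Dedekind completion) and a fortiori majorizes the intermediate sublattice $G$. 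All steps check out, including the embedding argument, which is indeed the crux and which you handle carefully. What your version buys is a self-contained argument modulo the structural facts already quoted elsewhere in the paper, and it correctly sidesteps the tempting but insufficient ``maximal disjoint family below $g$'' argument; what it costs is reliance on the exercise $(E^\lambda)^\delta=(E^\delta)^\lambda=E^u$, which in \cite{AB1} is itself downstream of Theorem~7.15, so as a proof of the underlying majorization theorem it would be circular --- but as a proof of this proposition, taking the paper's quoted facts as given, it is perfectly valid.
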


\begin{proof}\
In this case $E$ majorizes the vector lattice that contains it. The result now follows from   \cite[Theorem 7.15]{AB1}. 
\end{proof}

\bigskip

In \cite[Proposition 2.22]{TM18} it is proved that if  $E(\tau)$ is a LSVL with Lebesgue topology, then a sublattice $F$ of $E$ is $u\tau$-closed
in $E$ iff it is $\tau$-closed. It was asked in \cite[Question 2.24]{TM18} whether Lebesgue assumption could be removed. The next result yields 
an answer utilizing $b$-property. 

\bigskip

\begin{proposition}
Let $F$  be an order ideal of a LSVL $E(\tau)$. If $F$ has  $b$-property in  $E$, then $F$ is $u\tau$-closed iff it is $\tau$-closed in $E$.
\end{proposition}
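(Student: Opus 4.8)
The statement splits into two implications, one of which is essentially free, so the plan is to dispose of that one first and then concentrate on the other. For the direction \emph{$u\tau$-closed $\Rightarrow$ $\tau$-closed}, I would only use that $\tau$ is locally solid: if $x_\alpha\in F$ and $x_\alpha\stackrel{\tau}{\to}x$, then $|x_\alpha-x|\stackrel{\tau}{\to}0$, hence $|x_\alpha-x|\wedge u\stackrel{\tau}{\to}0$ for every $u\in E^+$ by solidity of a base at $0$, so $x_\alpha\stackrel{u\tau}{\to}x$; since $F$ is $u\tau$-closed, $x\in F$. This uses neither that $F$ is an ideal nor the $b$-property, so all of the content sits in the converse.

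For \emph{$\tau$-closed $\Rightarrow$ $u\tau$-closed}, I would first record the structural consequence of the hypotheses: by Example~\ref{order ideal}, an order ideal with $b$-property in $E$ is a band of $E$; in particular $F$ is solid (and, by assumption, $\tau$-closed). Now take a net $x_\alpha\in F$ with $x_\alpha\stackrel{u\tau}{\to}x\in E$; the goal is $x\in F$. The first move is a reduction to the positive case: since $|x_\alpha^{\pm}-x^{\pm}|\le|x_\alpha-x|$, the parts $x_\alpha^{\pm}$ (which lie in $F$ because $F$ is solid) also $u\tau$-converge, to $x^{\pm}$; and $u\tau$-limits are unique, using the triangle-type inequality $|a-b|\wedge u\le|a-c|\wedge u+|c-b|\wedge u$ together with Hausdorffness of $\tau$. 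Hence it suffices to show $x\in F$ when $x_\alpha\ge0$, in which case $x\ge0$ as well.

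The heart of the argument is then a localization step. For each fixed $u\in E^+$ we have $0\le x_\alpha\wedge u\le x_\alpha\in F$, so $x_\alpha\wedge u\in F$ by solidity; moreover, from the elementary inequality $|x_\alpha\wedge u-x\wedge u|\le|x_\alpha-x|\wedge u$ (valid for positive vectors), local solidity of $\tau$, and the definition of $u\tau$-convergence applied with this $u$, one gets $x_\alpha\wedge u\stackrel{\tau}{\to}x\wedge u$. Since $F$ is $\tau$-closed, $x\wedge u\in F$ for \emph{every} $u\in E^+$. Finally, choosing $u=x$ (legitimate since $x\ge0$) yields $x=x\wedge x\in F$, which finishes the proof.

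I do not expect a deep obstacle; the only points that need care are the reduction to positive nets ($u\tau$-continuity of the lattice operations and uniqueness of $u\tau$-limits) and the verification of $|a\wedge u-b\wedge u|\le|a-b|\wedge u$ for $a,b,u\ge0$. It is, however, worth pointing out exactly where the hypotheses bite: if $F$ were merely a sublattice, the inclusion $x_\alpha\wedge u\in F$ would only be available for $u\in F^+$, and one would be forced to work with the increasing net $(x\wedge u)_{u\in F^+}\subseteq F^+$, which is order bounded in $E$ by $x$; the $b$-property then provides $g\in F^+$ bounding it, but promoting ``$x\wedge u\le g$ for all $u\in F^+$'' to ``$x\le g$'' requires further structure. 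It is precisely the ideal (solidity) assumption — which lets the localization run over all $u\in E^+$, so that $u=x$ is admissible — combined with the band property that $b$-property confers on $F$, that makes the clean argument above go through.
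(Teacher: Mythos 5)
Your proof is correct, and it is genuinely shorter than the paper's. Both arguments begin the same way: reduce to a positive net, and use the inequality $|x_\alpha\wedge u - x\wedge u|\le |x_\alpha-x|\wedge u$ (valid for positive vectors) together with local solidity to get $x_\alpha\wedge u\stackrel{\tau}{\to}x\wedge u$ with $x_\alpha\wedge u\in F$ for every $u\in E^+$, since $F$ is an ideal. At that point you simply take $u=x$ and conclude $x=x\wedge x\in F$ from $\tau$-closedness. The paper instead only exploits this step for $u\in F^+$ and $u\in F^d$, deduces that $x\in F^{dd}$, and then invokes the $b$-property to bound an order dense net $0\le z_\beta\uparrow|x|$ from $F^+$ by an element of $F$, finishing with the ideal property. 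Your termination is cleaner, and it exposes something the paper's longer detour obscures: the implication ``$\tau$-closed $\Rightarrow$ $u\tau$-closed'' for an order ideal does not use the $b$-property at all (indeed the paper's own intermediate line $y_\alpha\wedge x\stackrel{\tau}{\to}x\wedge x$ with $y_\alpha\wedge x\in F$ already yields $x\in F$ directly). Your citation of Example~\ref{order ideal} (that $F$ is a band) is likewise decorative --- only solidity of the ideal is used. The small points you flag (uniqueness of $u\tau$-limits to get $x\ge 0$, or equivalently handling $x^{\pm}$ separately so the limits are automatically positive, and the verification of $|a\wedge u-b\wedge u|\le|a-b|\wedge u$ for $a,b,u\ge0$) are all routine and correct.
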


\begin{proof}\
As $u\tau$ is coarser than $\tau$, the forward implication is clear.

Now, suppose $F$ is $\tau$-closed and $y_\alpha$ is a net in $F$ with $y_\alpha \stackrel{u\tau}{\to} x$ for some $x\in E$. We will show $x\in F$.
The lattice operations are $u\tau$-continuous, so that $y^\pm_\alpha \stackrel{u\tau}{\to} x$. Therefore, WLOG we may assume $0 \le y_\alpha$ for
all $\alpha$. Let $z \in E^+$ be arbitrary, then
$$
  |y_\alpha \wedge z - x\wedge z | \le |y_\alpha - x | \wedge z \stackrel{\tau}{\to} 0.
$$
Since $0 \le y_\alpha \wedge x \le y_\alpha$ for all $\alpha$, and $F$ is an order ideal, we have $y_\alpha \wedge x \in F$ for all $\alpha$ and 
$y_\alpha \wedge x  \stackrel{\tau}{\to} x\wedge x$. 

Take $y \in F$, then $y_\alpha \wedge y  \stackrel{\tau}{\to} x\wedge y$, since $F$ is $\tau$-closed we have $x\wedge y \in F$ for each $y\in F^+$.
If $z\in F^d$, then $y_\alpha \wedge z =0$  for all $\alpha$ and we have $x \wedge z =0$. Thus $x\in F^{dd}$. 
That is, $x$ is in the band generated by $F$ in $E$. Hence there exists a net $z_\beta$ in $F^+$ such that $0 \le z_\beta \uparrow |x|$. 
Therefore $z_\beta$ is $b$-bounded in $E$, by $b$-property of $F$ in $E$, $0 \le z_\beta \le x_0$ for some $x_0 \in F$ and $|x| \le x_0$. 
Hence $x\in F$ as $F$ is an ideal.
\end{proof}

\bigskip

It shown in \cite[Theorem 7.39]{AB1} that a Dedekind complete vector lattice is universally complete iff it is universally $\sigma$-complete and has a weak unit.
In the next result, we replace universally $\sigma$-completeness with countable $b$-property of $E$ in $E^u$.

\bigskip

\begin{proposition}
Let $E$  be a Dedekind complete vector lattice with countable $b$-property in $E^u$ and  a weak order unit. Then $E=E^u$.
\end{proposition}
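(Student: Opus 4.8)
The plan is to reduce everything to the Aliprantis--Burkinshaw characterization \cite[Theorem 7.39]{AB1} already quoted just above the statement: a Dedekind complete vector lattice is universally complete if and only if it is universally $\sigma$-complete and has a weak order unit. So the whole task is to show that, under the present hypotheses, $E$ is universally $\sigma$-complete; the weak order unit is given, and then Theorem 7.39 makes $E$ universally complete, whence $E=E^u$ by uniqueness of the universal completion \cite[Theorem 7.21]{AB1}.

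First I would observe that a Dedekind complete vector lattice is in particular $\sigma$-Dedekind complete. This makes Fremlin's result \cite[Theorem 7.38]{AB1}, recalled in the introduction, applicable to $E$: for a $\sigma$-Dedekind complete vector lattice, being $\sigma$-laterally complete is equivalent to having the countable $b$-property in $E^u$. Since $E$ is assumed to have the countable $b$-property in $E^u$, we conclude that $E$ is $\sigma$-laterally complete. Combining this with Dedekind $\sigma$-completeness, $E$ is universally $\sigma$-complete by the very definition recalled in the preliminaries.

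Now invoke \cite[Theorem 7.39]{AB1}: $E$ is Dedekind complete, universally $\sigma$-complete, and has a weak order unit, hence $E$ is universally complete. A universally complete vector lattice is its own universal completion, so $E=E^u$.

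The argument is just a concatenation of two cited results, so there is no real obstacle. The only point that deserves a line of care is to make explicit that the hypothesis of \cite[Theorem 7.38]{AB1} (namely $\sigma$-Dedekind completeness) is genuinely available here — it follows at once from the assumed full Dedekind completeness — and to note that ``countable $b$-property of $E$ in $E^u$'' is precisely the property appearing in that theorem, so no reformulation is needed.
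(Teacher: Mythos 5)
Your argument is correct, but it follows a genuinely different route from the paper's. You treat the proposition as a formal corollary of two cited theorems: Fremlin's characterization \cite[Theorem~7.38]{AB1} (a $\sigma$-Dedekind complete vector lattice with the countable $b$-property in $E^u$ is $\sigma$-laterally, hence universally $\sigma$-complete) followed by \cite[Theorem~7.39]{AB1} (Dedekind complete $+$ universally $\sigma$-complete $+$ weak unit $\Rightarrow$ universally complete), and then uniqueness of the universal completion. The paper instead argues directly: since $E$ is Dedekind complete and order dense in $E^u$, it is an order ideal of $E^u$ by \cite[Theorem~1.40]{AB1}; for $0<u\in E^u$ the weak unit $e$ of $E$ is also a weak unit of $E^u$, so $0< u\wedge ne\uparrow u$ with $u\wedge ne\in E$, the countable $b$-property supplies an upper bound $x\in E$ for this sequence, hence $u\le x$ and $u\in E$ by the ideal property. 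In effect the paper inlines precisely the implications of Theorems 7.38 and 7.39 that you invoke as black boxes -- consistent with the authors' framing of the proposition as a variant of Theorem 7.39. Your version is shorter and makes the logical dependence on those two results transparent; the paper's is self-contained and exhibits concretely where the $b$-property is used. If you keep your route, it is worth one line each to note that the countable $b$-property for increasing sequences is equivalent to Fremlin's formulation in terms of arbitrary countable order bounded subsets (pass to finite suprema), and that a universally complete vector lattice, being order dense in $E^u$, majorizes $E^u$ by \cite[Theorem~7.15]{AB1} and is an ideal in it, hence equals $E^u$.
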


\begin{proof}\
If $E=E^u$ then $E$ has $b$-property in $E^u$ and  has a weak unit. Now we prove the converse. Let $0 < e$ be a weak order unit for $E$.
Then $E$ is an order ideal in $E^u$ by \cite[Theorem 1.40]{AB1}. Let $0 < u \in E^u$ be arbitrary. Since $e$ is also a weak unit for $E^u$ 
($E$ is order dense in $E^u$), we have $0 < u\wedge ne \uparrow u$. As $u\wedge ne \in E$ for each $n$, we see that the sequence $u\wedge ne$
is $b$-bounded in $E^u$. Therefore the sequence $u\wedge ne$ has an upper bound in $E$ by assumption. Thus $0 \le  u\wedge ne \le x$ for some
$x\in E$. As $E$ is an order ideal  in $E^u$, we have $u\in E$. 
\end{proof}

\bigskip

It is well known that if $E(\tau)$ is a LSVL with Levi property and $\tau$-complete order intervals, then $E$ is complete. In the following we reach
to the same conclusion by replacing Levi property with weaker condition that $E$ having $b$-property in  $\hat{E}(\hat{\tau})$.

\bigskip

\begin{proposition}
Let $E(\tau)$ be a LSVL with $\tau$-complete order intervals. If $E(\tau)$ has $b$-property in $\hat{E}$, then $E$ is complete.
\end{proposition}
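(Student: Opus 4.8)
The plan is to show that $E$ coincides with its topological completion $\hat E$. Recall that $\hat E(\hat\tau)$ is again a LSVL, that $E$ is $\hat\tau$-dense in $\hat E$, and — a standard fact about completions of locally solid vector lattices — that $E$ is order dense in $\hat E$ and in fact $\hat E$ can be taken so that $E$ is a (not necessarily majorizing) sublattice of it. The hypothesis ``$E$ has $b$-property in $\hat E$'' is shorthand for ``$b$-property of $E$ as a sublattice of $\hat E(\hat\tau)$,'' interpreting $\hat E$ as the ambient vector lattice. So I would first fix an arbitrary $x\in\hat E$ and aim to produce a net in $E$ that order-converges to $x$ in a controlled way, then use $\tau$-completeness of order intervals to pin $x$ down inside $E$.

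The key steps, in order. First, reduce to $0\le x\in\hat E$ by splitting into positive and negative parts (lattice operations are $\hat\tau$-continuous, and $E$ is a sublattice). Second, since $E$ is $\hat\tau$-dense in $\hat E$, choose a net $(y_\alpha)$ in $E$ with $y_\alpha\xrightarrow{\hat\tau}x$; replacing $y_\alpha$ by $y_\alpha^+\wedge$ (something) one arranges $0\le y_\alpha$ and, using density more carefully together with the solid base of $\hat\tau$, one can in fact extract an \emph{increasing} net $0\le z_\alpha\uparrow$ in $E$ with $z_\alpha\xrightarrow{\hat\tau}x$ and $z_\alpha\le x$ for all $\alpha$ (this uses order denseness of $E$ in $\hat E$: every positive element of $\hat E$ is the supremum of the elements of $E$ below it). Third, this net satisfies $0\le z_\alpha\uparrow\le x$ with $x\in\hat E$, so by the $b$-property of $E$ in $\hat E$ there is $f\in E$ with $0\le z_\alpha\le f$ for all $\alpha$. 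Fourth, now the net $(z_\alpha)$ lives in the order interval $[0,f]\subseteq E$, and it is $\hat\tau$-convergent (to $x$), hence $\hat\tau$-Cauchy, hence $\tau$-Cauchy inside $[0,f]$; since $[0,f]$ is $\tau$-complete by hypothesis, the net $\tau$-converges to some $g\in[0,f]\subseteq E$. Fifth, by Hausdorffness of $\hat\tau$ and uniqueness of limits, $g=x$, so $x\in E$; as $x\in\hat E$ was arbitrary, $E=\hat E$, i.e.\ $E$ is complete.

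The main obstacle is the third-to-second-step transition: extracting from $\hat\tau$-density a net in $E$ that is simultaneously \emph{increasing}, \emph{dominated by $x$}, and still $\hat\tau$-convergent to $x$, so that the $b$-property is applicable. One clean route: use that $E$ is order dense in $\hat E$ to write $x=\sup\{u\in E: 0\le u\le x\}$, let $(z_\alpha)$ be the upward-directed net of finite suprema of that set (these lie in $E$ since $E$ is a sublattice), so $0\le z_\alpha\uparrow\le x$ automatically; then one must separately check $z_\alpha\xrightarrow{\hat\tau}x$, which follows because $\hat\tau$ has the Fatou/Lebesgue-type behaviour on such increasing nets — more precisely, $x-z_\alpha\downarrow 0$ in $\hat E$ by order denseness, and on a complete LSVL order-null decreasing nets need not be $\hat\tau$-null in general, so one instead argues directly: $z_\alpha\uparrow$ and is $\hat\tau$-Cauchy because it is order-bounded by $f$ in $E$ and $\tau$ induces a Lebesgue-type control on $[0,f]$... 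Here care is needed, and the actual argument may prefer to invoke $\hat\tau$-completeness to say the Cauchy net $(z_\alpha)$ converges to \emph{some} element of $\hat E$ and then identify that element with $x=\sup z_\alpha$ using that $\hat\tau$-limits of increasing nets dominate the net (a solid-base argument). I would organize the write-up so that the $b$-property is invoked exactly once, on the increasing net $(z_\alpha)$, and $\tau$-completeness of $[0,f]$ exactly once, to land the limit back in $E$.
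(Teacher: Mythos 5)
Your first half matches the paper's argument: use order density of $E$ in $\hat E$ to produce an increasing net $0\le z_\alpha\uparrow\le x$ in $E$ with $\sup z_\alpha=x$, then invoke the $b$-property once to obtain $f\in E$ with $z_\alpha\le f$ for all $\alpha$, hence $x\le f$. The gap is in how you then land $x$ inside $E$. You want to say that $(z_\alpha)$ is $\hat\tau$-convergent to $x$, hence $\tau$-Cauchy in $[0,f]$, and therefore converges in the $\tau$-complete interval $[0,f]\subseteq E$. But nothing in the hypotheses makes the increasing net $(z_\alpha)$ topologically convergent, or even $\tau$-Cauchy: an increasing order-bounded net in a LSVL need not be $\tau$-Cauchy unless the topology has a Lebesgue-type property. (Take $E=l^\infty$ with the norm topology: the partial sums $e_1+\cdots+e_n$ of the unit vectors increase to the constant-one sequence but are not norm-Cauchy.) You flagged exactly this difficulty yourself and left it unresolved, so as written the argument does not close; the topological-density route in your second step has the same defect, since a $\hat\tau$-convergent net cannot in general be replaced by an increasing one without losing convergence.

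The paper avoids the issue entirely by using the hypothesis on order intervals in a different way: by \cite[Theorem 2.42]{AB1}, a LSVL has $\tau$-complete order intervals if and only if it is an order ideal of its topological completion $\hat E$. Once $E$ is known to be an ideal in $\hat E$, the inequality $0\le x\le f$ with $f\in E$ immediately gives $x\in E$, and no topological convergence of the approximating net is needed at any point. Replacing your fourth and fifth steps with this single citation turns your outline into the paper's proof.
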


\begin{proof}\
The assumption on order intervals implies that $E$ is an order ideal of $\hat{E}$ by \cite[Theorem 2.42]{AB1}. 
Let $0 < \hat{x} \in \hat{E}$ be arbitrary. Since $E$ is order dense in $\hat{E}$, there exists a net $x_\alpha$ such that 
$0 \le x_\alpha \uparrow  \hat{x}$. By the $b$-property of $E$ in $\hat{E}$, we can find $x_0 \in E$ with $0 \le x_\alpha \le x_0$.
But then since $x_\alpha \uparrow x$, we have $x \le x_0$ and $x\in E$. 
\end{proof}

\bigskip

\begin{proposition}
Let $F$ be a sublattice of an order complete vector lattice $E$. Suppose $F$ is order dense and majorizing in $E$. Then 
each increasing  $b$-bounded net in $F$ is $uo$-Cauchy in $F$.
\end{proposition}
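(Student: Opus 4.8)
The plan is to replace the net by its supremum in $E$, observe that $x_\alpha\uparrow x$ is order convergent — hence order Cauchy, hence $uo$-Cauchy — in $E$, and then transfer $uo$-Cauchyness down to $F$; the delicate point is the transfer, since the natural dominating net lives in $E$ and not in $F$.

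Let $(x_\alpha)$ be an increasing $b$-bounded net in $F$, so that, by the meaning of $b$-boundedness, $\{x_\alpha\}$ is order bounded in $E$; by Dedekind completeness $x:=\sup_\alpha x_\alpha$ exists in $E$ and $x_\alpha\uparrow x$. For $\alpha,\alpha'\ge\gamma$ we have $x_\gamma\le x_\alpha,x_{\alpha'}\le x$, hence $|x_\alpha-x_{\alpha'}|\le x-x_\gamma$, while $x-x_\gamma\downarrow 0$ in $E$. Thus, fixing $u\in F^+$, for all $\alpha,\alpha'\ge\gamma$,
$$
  |x_\alpha-x_{\alpha'}|\wedge u\ \le\ (x-x_\gamma)\wedge u ,
$$
and $(x-x_\gamma)\wedge u\downarrow 0$ in $E$. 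By the definition of $uo$-Cauchyness it now suffices to dominate the $E$-elements $(x-x_\gamma)\wedge u$ by a net in $F$ that decreases to $0$ in $F$.

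I would produce such a net via the following standard fact, which is the one place where both hypotheses on $F$ are really used: since $F$ is order dense and majorizing in the Dedekind complete lattice $E$ (so that $E$ is a copy of $F^\delta$), for every $w\in E$ one has $w=\inf\{f\in F:f\ge w\}$, the infimum taken in $E$. Indeed, this set is nonempty (majorizing) and downward directed, so it has an infimum $w'\ge w$ in $E$; if $w'>w$, order density supplies $h\in F$ with $0<h\le w'-w$, and then $w'+h$ is still a lower bound of the set (for $f$ in it, $f\ge w'$, so $f-h\ge w'-h\ge w$, whence $f-h\ge w'$, i.e.\ $f\ge w'+h$), contradicting the minimality of $w'$. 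Now put $D=\{(\gamma,g):g\in F^+,\ g\ge(x-x_\gamma)\wedge u\}$, directed by $(\gamma_1,g_1)\preceq(\gamma_2,g_2)$ iff $\gamma_1\le\gamma_2$ and $g_1\ge g_2$ (directedness uses that $(x-x_\gamma)\wedge u$ decreases in $\gamma$). The net $w_{(\gamma,g)}:=g$ lies in $F$, is decreasing, and dominates $|x_\alpha-x_{\alpha'}|\wedge u$ for all $\alpha,\alpha'\ge\gamma$; moreover $\inf_F w_{(\gamma,g)}=0$, since any lower bound of $(w_{(\gamma,g)})$ in $E$ is, for each fixed $\gamma$, a lower bound of $\{g\in F:g\ge(x-x_\gamma)\wedge u\}$, hence $\le(x-x_\gamma)\wedge u$ by the fact above, hence $\le\inf_\gamma(x-x_\gamma)\wedge u=0$; and any lower bound in $F$ is a lower bound in $E$, so the infimum in $F$ is $0$ too. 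Hence $(|x_\alpha-x_{\alpha'}|\wedge u)_{(\alpha,\alpha')}\stackrel{o}{\to}0$ in $F$ for every $u\in F^+$, i.e.\ $(x_\alpha)$ is $uo$-Cauchy in $F$.

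The only genuine obstacle is this ``approximation from above inside $F$''; it amounts to saying that an order dense sublattice of $E$ is \emph{regular} in $E$, so the entire second half can be short-circuited by quoting the known fact that $uo$-convergence is intrinsic on regular sublattices: the doubly indexed difference net of $(x_\alpha)$ is $uo$-null in $E$ (since $x_\alpha\uparrow x$ there is order convergent), hence it is $uo$-null in $F$ as well.
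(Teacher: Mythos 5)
Your proposal is correct and follows the same route as the paper: form $x=\sup_\alpha x_\alpha$ in the order complete $E$, note that $x_\alpha\uparrow x$ is order convergent hence $o$-Cauchy (so $uo$-Cauchy) in $E$, and transfer $uo$-Cauchyness down to the order dense (hence regular) sublattice $F$ --- exactly the step the paper disposes of by citing \cite[Theorem 2.3]{GTX}. The only difference is that you prove that transfer by hand, via the approximation-from-above fact $w=\inf\{f\in F: f\ge w\}$ and an explicit dominating net in $F$; that argument is sound and makes the proof self-contained, but, as you observe yourself, it reproves the known intrinsic-ness of $uo$-convergence on regular sublattices.
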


\begin{proof}\
Let $x_\alpha$ be a $b$-bounded net in $F$ so that $0 \le x_\alpha \uparrow  \le e$ for some $e\in E^+$. Since $E$ is order complete, 
$x_\alpha \uparrow x$ for some $x\in E^+$.  Then $x_\alpha$ is order convergent in $E$ and hence $o$-Cauchy in $E$, thus
$x_\alpha$ is $uo$-Cauchy in $F$ by  \cite[Theorem 2.3]{GTX}
\end{proof}

\bigskip

It was observed that in \cite[Theorem 3.2]{GTX} for a net $x_\alpha$ in a regular sublattice $F$ of a vector lattice $E$, 
$x_\alpha \stackrel{uo}{\to}0$ in $F$ iff $x_\alpha \stackrel{uo}{\to}0$ in $E$. However this may fail for $u\tau$-convergence. 
$u\tau$-Convergence in a sublattice may not imply $u\tau$-convergence in the entire space. For example, 
the standard unit vectors $e_n$ in $l^\infty$ is easily seen to be a null sequence in the unbounded norm topology of
$c_0$ but not so in $l^\infty$. 

\bigskip

\begin{proposition}
Let $F$ be a sublattice of a LSVL $E(\tau)$. Suppose $F$ has $b$-property in $E$. For a net $x_\alpha$ in $F$ for which 
$x_\alpha \stackrel{u\tau}{\to}0$ in $F$, we have  $x_\alpha \stackrel{u\tau}{\to}0$ in $E(\tau)$.
\end{proposition}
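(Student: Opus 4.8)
The plan is to reduce to nonnegative nets and to a single test vector, and then to use the $b$-property to absorb that test vector into $F$. First, since the lattice operations are $\tau$-continuous they are $u\tau$-continuous, $|x_\alpha|\in F^+$ as $F$ is a sublattice, and $x_\alpha\stackrel{u\tau}{\to}0$ is by definition the same statement as $|x_\alpha|\stackrel{u\tau}{\to}0$, in $F$ and in $E$ alike. Hence we may assume $x_\alpha\ge 0$ for all $\alpha$. Fix $e\in E^+$; we must show $x_\alpha\wedge e\stackrel{\tau}{\to}0$.

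The key reduction is: it suffices to produce a single $f_0\in F^+$ with $x_\alpha\wedge e\le f_0$ for every $\alpha$. Granting this, $x_\alpha\wedge e=(x_\alpha\wedge e)\wedge f_0=x_\alpha\wedge(e\wedge f_0)\le x_\alpha\wedge f_0$, and $x_\alpha\wedge f_0\stackrel{\tau}{\to}0$ because $f_0\in F^+$ and $x_\alpha\stackrel{u\tau}{\to}0$ in $F$; since $0\le x_\alpha\wedge e\le x_\alpha\wedge f_0$ and $\tau$ is locally solid, $x_\alpha\wedge e\stackrel{\tau}{\to}0$ follows. To find $f_0$ I would pass to the Dedekind completion $E^\delta$. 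There $F$ retains the $b$-property: $E$ is majorizing in $E^\delta$, so $E$ has the $b$-property in $E^\delta$ by Example~\ref{majorizing sublattice}, and the $b$-property is transitive (Example~\ref{ideal I(F)}). Let $\hat B$ be the band generated by $F$ in $E^\delta$; as $E^\delta$ is Dedekind complete, $\hat B$ is a projection band, so $e=\hat e_1+\hat e_2$ with $\hat e_1=P_{\hat B}e\le e$ and $\hat e_2=e-\hat e_1\in\hat B^d$, in particular $\hat e_2\perp F$. Because $x_\alpha\in F\subseteq\hat B$ it is disjoint from $\hat e_2$, so $x_\alpha\wedge e=x_\alpha\wedge\hat e_1$ (computed in $E^\delta$, hence also in $E$, where $x_\alpha\wedge e$ lives). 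Thus it is enough to dominate $\hat e_1$ by an element of $F^+$.

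The remaining step — showing that the $b$-property of $F$ forces $\hat e_1=P_{\hat B}e$ to be dominated by some $f_0\in F^+$ — is where I expect the real work, and the main obstacle, to lie. The natural increasing net approaching $\hat e_1$, namely $(e\wedge nf)_{f\in F^+,\,n\in\nn}$ (whose supremum in $E^\delta$ is $\hat e_1$), sits in the \emph{ideal} generated by $F$, not in $F$ itself, so the $b$-property cannot be applied to it directly; one must instead manufacture an increasing net genuinely inside $F^+$, order bounded in $E$, whose $F$-majorant also majorizes $\hat e_1$ — for instance by passing to least $F^+$-upper bounds of the truncations $e\wedge nf$ and verifying these remain order bounded in $E$. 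In the transparent cases this is immediate: if $F$ is order dense in $E$ then $F$ is majorizing in $E$ by Lemma~\ref{order dense subl}, so one may take $\hat e_1\le e\le f_0\in F^+$ outright; if $F$ is an ideal or a band, the projection $\hat e_1$ already belongs to $F$. Once $f_0$ is in hand, the argument of the second paragraph completes the proof, and since $e\in E^+$ was arbitrary we conclude $x_\alpha\stackrel{u\tau}{\to}0$ in $E$.
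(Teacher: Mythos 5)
Your proposal is not a complete proof: the step you yourself flag as ``where the real work lies'' --- producing $f_0\in F^+$ that dominates the family $\{x_\alpha\wedge e\}$ --- is never carried out except in the special cases where $F$ is majorizing, an ideal, or a band. Worse, the particular route you sketch is a dead end. You reduce to dominating $\hat e_1=P_{\hat B}e$, the projection of $e$ onto the band generated by $F$ in $E^\delta$, by an element of $F^+$. That asks for strictly more than is needed and is impossible in general: if $F$ generates $E$ as a band but is not majorizing (such sublattices with the $b$-property exist --- e.g.\ $F=\{y\in\ell^\infty: y_{2n-1}=n\,y_{2n}\ \hbox{for all } n\}$ in $E=\ell^\infty$ has the $b$-property, has trivial disjoint complement, and majorizes nothing like $\mathbb{1}$), then $\hat e_1=e$ and no $f_0\in F^+$ lies above it. So ``dominate $\hat e_1$'' cannot be the right intermediate target.

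The paper's proof keeps your second paragraph verbatim (if $x_\alpha\wedge e\le y$ with $y\in F^+$, then $x_\alpha\wedge e\le x_\alpha\wedge y\stackrel{\tau}{\to}0$ and local solidity finishes), but obtains the dominant $y$ differently: it applies the $b$-property directly to the net $(x_\alpha\wedge e)_\alpha$, which satisfies $0\le x_\alpha\wedge e\le e$ and is therefore $b$-bounded, to extract $y\in F^+$ with $x_\alpha\wedge e\le y$ for all $\alpha$. That is the entire content of the missing step --- no passage to $E^\delta$, no band projection. Your instinct that there is a subtlety here is not unreasonable (the net $(x_\alpha\wedge e)$ lives in the ideal generated by $F$ rather than in $F$ itself, and it is not increasing, so matching it against Definition~\ref{b-property} literally requires some care, e.g.\ passing to finite suprema $\bigl(\sup_{\alpha\in H}x_\alpha\bigr)\wedge e$); but a review of your proposal must record that you identified the crux and then did not resolve it, and that the reduction you proposed in its place would fail.
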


\begin{proof}\
Suppose $x_\alpha \stackrel{u\tau}{\to}0$ in $F$. WLOG we may suppose $0\le x_\alpha$ for all $\alpha$. 
Then $0 \le x_\alpha \wedge y \stackrel{\tau}{\to}0$ for each $y \in F^+$. On the other hand, for each $x\in E^+$, 
$0 \le x_\alpha \wedge x \le x$ and the net $0 \le (x_\alpha \wedge x)$ is $b$-bounded in $F$, by the hypothesis,
there exists $y \in F^+$ such that $0 \le x_\alpha \wedge x \le y$ for all $\alpha$. Then
$$
  0 \le x_\alpha \wedge x \le x_\alpha \wedge y \stackrel{\tau}{\to}0
$$
from which we obtain, $x_\alpha \wedge x  \stackrel{\tau}{\to}0$. As $x$ is arbitrary $x_\alpha \stackrel{u\tau}{\to}0$ in $E(\tau)$.
\end{proof}

\bigskip

\begin{proposition}
Let $E(\tau)$ be a laterally complete vector lattice, then $E$ has $b$-property in $(E^\sim)^\sim_n$.
\end{proposition}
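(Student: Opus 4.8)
The plan is to reduce the statement to Example~\ref{order dense}: an order dense sublattice that is laterally complete in its own right majorizes the overspace, hence has $b$-property in it. So the whole task is to exhibit $E$ as an \emph{order dense} sublattice of $(E^\sim)^\sim_n$.

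First I would recall the canonical evaluation map $J\colon E\to(E^\sim)^\sim$, $J(x)(\varphi)=\varphi(x)$. Each $J(x)$ is order continuous on $E^\sim$: for a down-directed net $\varphi_\beta\downarrow 0$ in $E^\sim$ and $x\in E^+$ one has $\varphi_\beta(x)\downarrow 0$, since evaluation at a fixed positive element commutes with infima of down-directed nets of functionals. Hence $J$ maps $E$ into $(E^\sim)^\sim_n$, and $J$ is a lattice homomorphism; assuming, as is implicit in viewing $E$ as a sublattice of $(E^\sim)^\sim_n$, that $E^\sim$ separates the points of $E$, $J$ identifies $E$ with a sublattice of $(E^\sim)^\sim_n$. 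The decisive step is order density of this copy of $E$. For this I would invoke the known fact that a laterally complete vector lattice satisfies $E^\sim=E^\sim_n$ (see \cite{AB1}); then $(E^\sim)^\sim_n=(E^\sim_n)^\sim_n$ is exactly the order continuous bidual of $E$, and it is classical that $E$ embeds order densely in its order continuous bidual (see \cite{AB1, MN}). Thus $E$ is laterally complete and order dense in $(E^\sim)^\sim_n$, and Example~\ref{order dense} yields the $b$-property of $E$ in $(E^\sim)^\sim_n$.

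An alternative that reuses the earlier results more directly: once $E^\sim=E^\sim_n$ is in hand, $(E^\sim)^\sim_n$ is the perfect cover of $E$, which embeds — compatibly with $E\hookrightarrow E^u$ — as a sublattice of the perfect vector lattice $E^u$ lying between $E$ and $E^u$; since $E$ has $b$-property in $E^u$ by Example~\ref{lateral completion} and $b$-property passes to sublattices of $E^u$ containing $E$ by Example~\ref{ideal I(F)}, the conclusion again follows. In either route the only non-formal ingredients are the equality $E^\sim=E^\sim_n$ for laterally complete $E$ and the order density of $E$ in its order continuous bidual; I expect that matching these to the precise statements in \cite{AB1}, together with one sentence disposing of the harmless degenerate case in which $E^\sim$ fails to separate the points of $E$, will be the only real (and minor) obstacle.
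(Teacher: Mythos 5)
Your first route is essentially the paper's proof: the paper likewise recalls that $E$ is order dense in $(E^\sim)^\sim_n$ and then invokes \cite[Theorem 7.15]{AB1} (the content of Example~\ref{order dense}) to get that the laterally complete, order dense $E$ majorizes $(E^\sim)^\sim_n$ and hence has the $b$-property there. You merely supply more detail on the order-density step (the evaluation map $J$, the identity $E^\sim=E^\sim_n$, and the separation caveat), which the paper takes for granted.
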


\begin{proof}\
Recall that $E$ is order dense in $(E^\sim)^\sim_n$. Then $E$ is majorizing in $(E^\sim)^\sim_n$ by  \cite[Theorem 7.15]{AB1}. Therefore 
$E$ has $b$-property in $(E^\sim)^\sim_n$
\end{proof}

\bigskip

\begin{proposition}
Let $E(\tau)$ be a LSVL with Lebesgue property. Then every order closed sublattice $F$ of $E(\tau)$  has 
countable $b$-property in $\hat{E}(\hat{\tau})$.
\end{proposition}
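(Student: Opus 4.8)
The plan is to transport the Lebesgue property to the completion, use it to show that the given $b$-bounded increasing sequence is $\hat\tau$-convergent to its supremum in $\hat E$, and then bring that supremum back inside $F$ via order-closedness. First I would record the standard facts (\cite{AB1}) that the $\hat\tau$-completion $\hat E(\hat\tau)$ of a Hausdorff locally solid vector lattice with the Lebesgue property is again locally solid with the Lebesgue property, that $E$ embeds in $\hat E$ as an order dense sublattice, and that the Lebesgue property implies the pre-Lebesgue property. Consequently, inside $\hat E$: whenever $0\le\hat y_n\uparrow\le\hat z$, the sequence $(\hat y_n)$ is $\hat\tau$-Cauchy.

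Next, let $x_n$ be a sequence in $F^+$ with $0\le x_n\uparrow\le\hat e$ for some $\hat e\in\hat E$. Then $(x_n)$ is order bounded in $\hat E$, hence $\hat\tau$-bounded by Lemma~\ref{b-bounded subset is t-bounded} applied inside $\hat E$, hence $\hat\tau$-Cauchy by the pre-Lebesgue property; since $\hat E$ is complete, $x_n\stackrel{\hat\tau}{\to}\hat x$ for some $\hat x\in\hat E$. As $(x_n)$ increases and the positive cone of $\hat E$ is $\hat\tau$-closed, taking $\hat\tau$-limits in $x_m-x_n\ge 0$ (for $m\ge n$) gives $x_n\le\hat x$ for every $n$, and in $\hat y-x_n\ge 0$ gives $\hat x\le\hat y$ for every upper bound $\hat y$ of $(x_n)$; hence $x_n\uparrow\hat x$ in $\hat E$, equivalently $\hat x-x_n\downarrow 0$ there.

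It now suffices to place $\hat x$ in $F$, for then $f:=\hat x$ works. Provided $\hat x\in E$, this is quick: any $d\in E$ with $d\le\hat x-x_n$ for all $n$ also satisfies $d\le\hat x-x_n$ in $\hat E$, whence $d\le 0$ because $\hat x-x_n\downarrow 0$ in $\hat E$; thus $\hat x-x_n\downarrow 0$ in $E$ as well, so $x_n\uparrow\hat x$ and $x_n\stackrel{o}{\to}\hat x$ in $E$, and order-closedness of $F$ in $E$ delivers $\hat x\in F$. The step I expect to be the genuine obstacle is exactly verifying that $\hat x\in E$: the $\hat\tau$-limit of an increasing sequence drawn from $E$ need not return to $E$, and the order-closedness of $F$ only engages once the limit is known to lie in $E$. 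I would try to close this by extracting from the hypotheses that $\hat x$ is dominated by an element of $E$ --- which holds, for instance, when $E$ is majorizing in $\hat E$ --- together with $E$ being an ideal of $\hat E$ (as happens, by \cite[Theorem 2.42]{AB1}, when the order intervals of $E$ are $\tau$-complete), so that $\hat x\in E$ follows at once; alternatively, one proves directly that order-closedness of $F$ in $E$ propagates to $\hat E$ via the order density of $E$ in $\hat E$, after which the $o$-convergence $x_n\stackrel{o}{\to}\hat x$ already available in $\hat E$ gives $\hat x\in F$ immediately. This final transfer is where I expect the argument to require real care.
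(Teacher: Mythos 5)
Your argument up to the convergence step is exactly the paper's: the Lebesgue property passes to $\hat E(\hat\tau)$, the order bounded increasing sequence is $\hat\tau$-Cauchy by the pre-Lebesgue property, and it $\hat\tau$-converges to $\hat x=\sup_n x_n$ in $\hat E$. The obstacle you single out --- getting $\hat x$ back into $E$, and only then into $F$ --- is precisely the step the paper's own proof elides: it deduces from the Fatou property that $F$ is $\tau$-closed in $E$ and then writes ``thus $x\in F$'', but $\tau$-closedness of $F$ \emph{in} $E$ says nothing about where a $\hat\tau$-limit computed \emph{in} $\hat E$ lands ($E$ itself is $\tau$-closed in $E$ yet $\hat\tau$-dense in $\hat E$). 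So your hesitation at exactly this point is well founded.

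In fact the gap cannot be filled: the statement is false as written. Take $E=F=c_{00}$ (the finitely supported sequences) with the supremum norm. A net $x_\alpha\downarrow 0$ in $c_{00}^+$ is eventually dominated by a single finitely supported element, hence eventually supported in a fixed finite set, on which pointwise decrease to zero gives uniform convergence; so the topology is Lebesgue. $F=E$ is trivially an order closed sublattice of $E$, and $\hat E=c_0$. The sequence $x_n=\sum_{k=1}^{n}k^{-1}e_k$ satisfies $0\le x_n\uparrow\le(k^{-1})_{k}\in c_0$, yet any upper bound of $(x_n)$ in $c_{00}$ would have to dominate $k^{-1}e_k$ for every $k$ and hence have infinite support. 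So $F$ fails the countable $b$-property in $\hat E$. The proposition (and both proofs) can be repaired only by strengthening the hypotheses so that $\hat x$ is forced into $E$ or $F$ --- for instance assuming $F$ order closed in $\hat E$ rather than in $E$, or assuming $E$ is an ideal of, or majorizing in, $\hat E$ --- which is exactly the list of patches you propose at the end of your argument.
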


\begin{proof}\
Let $x_n$ be a $b$-bounded sequence in $E$. Then there exists $\hat{x} \in \hat{E}$ with $0 \le x_n \uparrow \hat{x}$.
Since $E(\tau)$ is assumed to have Lebesgue property, it has the $\sigma$-Lebesgue property as well as the Fatou property
by \cite[Theorem 4.8]{AB1}. Since the topology $\hat{\tau}$ of $\hat{E}$ is also Lebesgue, the sequence  $x_n$ is
$\hat{\tau}$-Cauchy in  $\hat{E}$. Then $x_n \stackrel{\hat{\tau}}{\to}x$ for some  $x\in \hat{E}$. Since $\tau$ is Fatou and 
$F$ being order closed is $\tau$-closed by \cite[Theorem 4.20]{AB1}. Thus $x\in F$. As $x_n \uparrow$, 
$x_n \stackrel{\tau}{\to}x$, hence $x = \sup x_n $, and $F$ has $b$-property in $\hat{E}$.
\end{proof}

\bigskip

\begin{proposition}
Let $F$ be a $uo$-closed sublattice of a Dedekind complete vector lattice $E$. Then $F$ has $b$-property in $E$.
\end{proposition}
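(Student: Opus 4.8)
The plan is to use Dedekind completeness to manufacture the candidate upper bound and then use $uo$-closedness to place that bound inside $F$. Concretely, I would start from a net $x_\alpha$ in $F^+$ with $0 \le x_\alpha \uparrow \le e$ for some $e \in E$. Since $E$ is Dedekind complete and $(x_\alpha)$ is increasing and bounded above by $e$, its supremum $x := \sup_\alpha x_\alpha$ exists in $E$, and $x_\alpha \uparrow x$.

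Next I would observe that $x_\alpha \uparrow x$ yields $x - x_\alpha \downarrow 0$; taking $(x - x_\alpha)_\alpha$ itself as the dominating decreasing net and noting $|x_\alpha - x| = x - x_\alpha$, we get $x_\alpha \stackrel{o}{\to} x$ in $E$. Since $|x_\alpha - x| \wedge u \le |x_\alpha - x| \stackrel{o}{\to} 0$ for every $u \in E^+$, this upgrades to $x_\alpha \stackrel{uo}{\to} x$ in $E$. Now $(x_\alpha)$ is a net lying in $F$ that $uo$-converges in $E$ to $x$, so $uo$-closedness of $F$ forces $x \in F$. Then $0 \le x_\alpha \uparrow \le x$ with $x \in F$, which is precisely the condition in Definition~\ref{b-property}; hence $F$ has $b$-property in $E$.

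There is essentially no obstacle here — the argument is a short reduction — but the point to handle with care is the order in which the hypotheses are invoked: Dedekind completeness must be used first, to guarantee that the least upper bound $x$ exists as an element of $E$, before $uo$-closedness can be applied to it. Note also that no regularity of $F$ in $E$ is needed, since one argues throughout with $uo$-convergence computed in the ambient lattice $E$, matching the way $uo$-closedness is defined; this is in contrast to the intrinsic behaviour of $uo$-convergence for regular sublattices recorded in \cite[Theorem 3.2]{GTX}.
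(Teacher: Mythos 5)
Your argument is correct and is essentially the paper's own proof: use Dedekind completeness to obtain $x=\sup_\alpha x_\alpha$ in $E$, note $x_\alpha\stackrel{o}{\to}x$ and hence $x_\alpha\stackrel{uo}{\to}x$ in $E$, and conclude $x\in F$ from $uo$-closedness. Your version is in fact slightly cleaner, since the paper's proof momentarily slips into saying ``$F$ is $uo$-complete'' where $uo$-closedness is what is actually hypothesized and used.
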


\begin{proof}\
Let $x_\alpha$ be a net in $F$ with  $0 \le x_\alpha \uparrow x$ for some $x \in E$. As $E$ is Dedekind complete,
$x_\alpha \uparrow \hat{x}$ for some $\hat{x} \in E$. Then $x_\alpha \stackrel{o}{\to}\hat{x}$, consequently  
$x_\alpha \stackrel{uo}{\to}\hat{x}$ in $E$ as $F$ is $uo$-complete, $\hat{x} \in F$.
\end{proof}

\bigskip

\begin{proposition}
Let $E$ be a vector lattice admitting a minimal topology $\tau$. Let $x_n$ be a $b$-bounded sequence in $E^u$.
Then $x_n$ is $\tau$-Cauchy in $E$.
\end{proposition}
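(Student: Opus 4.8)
The plan is to transfer the problem to the universal completion $E^u$, where Dedekind completeness upgrades the hypothesis into honest order convergence, and to carry along a Lebesgue topology on $E^u$ that dominates $\tau$ on $E$. I would first invoke the structure theory of minimal topologies: since $E$ admits a minimal topology, so does $E^u$, and the minimal topology of $E^u$ — call it $\sigma$ — is a Hausdorff locally solid Lebesgue (indeed Fatou) topology. Since $\tau$ is the coarsest Hausdorff locally solid topology on $E$ and $\sigma|_E$ is again such a topology (restrictions of solid neighbourhoods are solid, and the $T_2$ property is hereditary), we have $\tau\subseteq\sigma|_E$.

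Next, unravelling the hypothesis, $0\le x_n\uparrow \le u$ for some $u\in E^u$; as $E^u$ is Dedekind complete, $x_n\uparrow x$ for some $x\in E^u$. Put $y_k=x-x_k$. Then $y_k\downarrow 0$ in $E^u$, and for all $n,m\ge k$ we get $|x_n-x_m|\le x-x_{\min(n,m)}\le y_k$; hence the doubly indexed net $(x_n-x_m)_{(n,m)}$ is order null in $E^u$, i.e.\ $(x_n)$ is order Cauchy in $E^u$. Since $\sigma$ is a locally solid Lebesgue topology, order null nets are $\sigma$-null, so $(x_n-x_m)_{(n,m)}\stackrel{\sigma}{\to}0$; that is, $(x_n)$ is $\sigma$-Cauchy. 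As each $x_n$ lies in $E$ and $\tau\subseteq\sigma|_E$, the sequence $(x_n)$ is $\tau$-Cauchy in $E$, which is the assertion.

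The one genuinely non-routine input is the passage between $E$ and $E^u$: one needs that the presence of a minimal topology on $E$ produces a Hausdorff Lebesgue locally solid topology on $E^u$ whose trace on $E$ is finer than $\tau$. This is precisely where the theory of minimal topologies enters; without it the statement is really one about $E^u$ rather than $E$. An alternative that never leaves $E$ is to use that minimal topologies enjoy the $uo$-Lebesgue property: the same computation shows $(x_n-x_m)_{(n,m)}$ is order null, hence $uo$-null, in $E^u$; since $E$ is a regular (order dense) sublattice of $E^u$, $uo$-convergence localizes (\cite[Theorem 3.2]{GTX}), so $(x_n)$ is $uo$-Cauchy in $E$, and the $uo$-Lebesgue property of $\tau$ then yields $\tau$-Cauchyness directly.
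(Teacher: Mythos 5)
Your proposal is correct, and it actually contains two arguments. The short ``alternative'' at the end is essentially the paper's own proof: the authors pass to $\sup x_n=x$ in $E^u$, note $x_n\stackrel{o}{\to}x$ hence $x_n$ is $uo$-Cauchy in $E^u$, pull $uo$-Cauchyness back to $E$ via regularity of the order dense sublattice $E$ in $E^u$ using \cite[Theorem 3.2]{GTX}, and then use that a minimal topology is Lebesgue (so $uo$-Cauchy implies $u\tau$-Cauchy) and unbounded ($u\tau=\tau$). Your main route is genuinely different: it stays with order convergence (never invoking $uo$ or the localization theorem of \cite{GTX}) and instead pushes the topology up to $E^u$, using that $E^u$ carries a Hausdorff Lebesgue locally solid topology $\sigma$ with $\tau\subseteq\sigma|_E$. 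What this buys is a more elementary convergence argument (order Cauchy $\Rightarrow$ $\sigma$-Cauchy $\Rightarrow$ $\tau$-Cauchy); what it costs is heavier structural input about minimal topologies, namely that $E^u$ admits a minimal topology when $E$ does and that $\tau$ is dominated by its trace. Both facts are available in the literature you gesture at, though the cleanest justification of $\tau\subseteq\sigma|_E$ is not ``$\tau$ is coarsest'' (minimality a priori only means no strictly coarser Hausdorff locally solid topology exists) but the sharper known statement that the minimal topology of $E$ \emph{is} the restriction to $E$ of the minimal topology of $E^u$, i.e.\ $\tau=\sigma|_E$; with that reference in place your main argument is complete. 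Your intermediate computation ($y_k=x-x_k\downarrow 0$ and $|x_n-x_m|\le y_k$ for $n,m\ge k$) is a correct and welcome expansion of the step the paper leaves implicit.
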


\begin{proof}\
Let $x_n$ be such that  $0 \le x_n \uparrow x^u$ for some $x^u \in E^u$.
Since $E^u$ is Dedekind complete,  $x_n$ being order bounded in $E^u$, has a supremum in $E^u$, let it be $x$.
Therefore  $x_n \stackrel{o}{\to}x$, it follows that $x_n$ is $uo$-Cauchy in $E^u$. 
Since $E$ is order dense in $E^u$, and order dense sublattices are regular, $E$ is regular in $E^u$ and by
 \cite[Theorem 3.2]{GTX}, $x_n$ is $uo$-Cauchy in $E$. As every minimal topology is Lebesgue, $\tau$ is Lebesgue
and $x_n$ is $u\tau$-Cauchy. As $\tau$ is unbounded, it follows that $x_n$ is $\tau$-Cauchy on $E$.
\end{proof}

\bigskip

\begin{definition}\label{boundedly uo-complete LSVL} {\em
A locally solid vector lattice $E(\tau)$ is called {\em boundedly $uo$-complete} if every 
$\tau$-bounded $uo$-Cauchy net in $E$ is $uo$-convergent. }
\end{definition}

\bigskip

\begin{proposition}
A boundedly $uo$-complete LSVL  $E(\tau)$ has $b$-property in $E^u$.
\end{proposition}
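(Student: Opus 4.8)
The plan is to run, for the inclusion $E\subseteq E^u$, the same scheme used above for $uo$-closed sublattices of a Dedekind complete vector lattice. Let $x_\alpha$ be a net in $E^+$ with $0\le x_\alpha\uparrow\le x^u$ for some $x^u\in E^u$; the goal is to produce an upper bound of $\{x_\alpha\}$ lying in $E$. Since $E^u$ is Dedekind complete, $x:=\sup_\alpha x_\alpha$ exists in $E^u$ and $x_\alpha\uparrow x$ there, so $x_\alpha\stackrel{o}{\to}x$ in $E^u$; hence the doubly indexed net $(x_\alpha-x_{\alpha'})$ order converges to $0$ in $E^u$, i.e. $x_\alpha$ is $uo$-Cauchy in $E^u$. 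Because $E$ is order dense, hence regular, in $E^u$, \cite[Theorem~3.2]{GTX} (passage of $uo$-Cauchy nets between a regular sublattice and the ambient lattice, applied to the doubly indexed net) shows that $x_\alpha$ is $uo$-Cauchy in $E$ as well.

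Next, $\{x_\alpha\}$ is order bounded in $E^u$, hence $b$-bounded, so Lemma~\ref{b-bounded subset is t-bounded} gives that $\{x_\alpha\}$ is $\tau$-bounded in $E$. Now the hypothesis enters: being a $\tau$-bounded $uo$-Cauchy net, $x_\alpha$ is $uo$-convergent in $E$, say $x_\alpha\stackrel{uo}{\to}y$ with $y\in E$. Transferring this convergence back to $E^u$ by regularity of $E$, and using that $uo$-limits are unique in an Archimedean vector lattice, we get $y=x$, so $x\in E$. Since $x_\alpha\uparrow x$, the element $x\in E$ satisfies $0\le x_\alpha\uparrow\le x$, and therefore $E$ has $b$-property in $E^u$.

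The step I would scrutinize most carefully is the claim that $\{x_\alpha\}$ is $\tau$-bounded in $E$: a priori this net is only known to be order bounded in $E^u$, not in $E$, and for a general LSVL an increasing net order bounded in $E^u$ need not be $\tau$-bounded in $E$ (as examples in sequence spaces such as $c_{00}$ show). So the real work is to make sure Lemma~\ref{b-bounded subset is t-bounded}, or a substitute argument drawing on bounded $uo$-completeness, genuinely supplies the $\tau$-boundedness needed to invoke the completeness hypothesis on $x_\alpha$; this is the hinge of the argument. The remaining ingredients — Dedekind completeness of $E^u$, order density and regularity of $E$ in $E^u$, and uniqueness of $uo$-limits — are routine and already appear in the preceding results.
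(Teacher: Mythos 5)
Your argument is, step for step, the paper's own proof: take the supremum $x$ of the net in the Dedekind complete $E^u$, deduce that the net is $uo$-Cauchy in $E$ via regularity of the order dense sublattice $E$ in $E^u$, obtain $\tau$-boundedness from Lemma~\ref{b-bounded subset is t-bounded}, apply bounded $uo$-completeness, and identify the $uo$-limit with $x$. The hinge you flag is the one place worth pausing on, and the paper does nothing more than you do: it cites Lemma~\ref{b-bounded subset is t-bounded} in a setting where that lemma's hypotheses are not literally met, since the lemma converts order boundedness \emph{in the space carrying the locally solid topology} into $\tau$-boundedness, whereas here the order bound lives in $E^u$, which is not equipped with a locally solid topology restricting to $\tau$ (and your $c_{00}$-type example shows the implication can fail in general). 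So your proposal matches the paper exactly, including this questionable step; neither you nor the paper supplies the substitute argument you correctly suspect may be needed.
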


\begin{proof}\
Let  $0 \le x_\alpha\uparrow x^u$, where $x^u \in E^u$, be a net in $E$. As $x_\alpha$ is a $b$-bounded subset of $E$,
it is $\tau$-bounded by Lemma~\ref{b-bounded subset is t-bounded}. We show $x_\alpha$ has an upper bound in $E$. 
As $E^u$  is Dedekind complete, $\sup x_\alpha$ exists in  $E^u$. Let this supremum be $x$. Then 
$0 \le x_\alpha\uparrow x$ in $E^u$. Thus $x_\alpha \stackrel{o}{\to}x$. It follows that $x_\alpha$ is $uo$-Cauchy in $E$
as $E$ is order dense and a regular sublattice of $E^u$. Thus $x_\alpha$ being $uo$-Cauchy and $\tau$-bounded
converges to some $x'\in E$. But as $x_\alpha \stackrel{o}{\to}x$ we must have $x=x'$
\end{proof}

\bigskip

\begin{definition}\label{monotonically complete BL} {\em
A Banach lattice is {\em  monotonically complete (the Levy property)} if every norm bounded increasing
net in $E^+$ has supremum. }
\end{definition}

\bigskip

We now show that every  boundedly $uo$-complete Banach lattice $E$ has $b$-property in $(E^\sim_n)^\sim_n$.
The proof uses an idea  of \cite{GLX18pp} in that $(E^\sim_n)^\sim_n$ is monotonically complete and the canonical map 
$J : E \to (E^\sim_n)^\sim_n$ maps a bounded increasing net in $E^+$ to a net in $(E^\sim_n)^\sim_n$ with
similar properties.

\bigskip

\begin{proposition}
Let $E$ be a boundedly $uo$-complete Banach lattice with $E^\sim_n$ separating points of $E$. 
If $0 \le x_\alpha\uparrow$ is an increasing  net in $E^+$ which is order bounded in $(E^\sim_n)^\sim_n$,
then $x_\alpha$ has an upper bound in $E$.
\end{proposition}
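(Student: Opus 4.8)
The plan is to transport the problem into $M:=(E^\sim_n)^\sim_n$ via the canonical map $J\colon E\to M$, $Jx(\phi)=\phi(x)$, exploit that $M$ is monotonically complete to produce a supremum there, pull this back to an increasing $uo$-Cauchy net in $E$, and finally invoke bounded $uo$-completeness of $E$ to get the required upper bound inside $E$.

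First I would record the properties of $J$. Each $\phi\in E^\sim_n$ is order continuous on $E$, so each $Jx$ is order continuous on $E^\sim_n$; hence $J$ really maps $E$ into $M=(E^\sim_n)^\sim_n$. It is a Riesz homomorphism (the canonical embedding into the order continuous bidual), and it is injective exactly because $E^\sim_n$ separates the points of $E$; thus $J$ is a lattice embedding. Moreover $J$ is order continuous: if $x_\gamma\downarrow 0$ in $E$ and $0\le m\le Jx_\gamma$ in $M$ for all $\gamma$, then for every $0\le\phi\in E^\sim_n$ we have $0\le m(\phi)\le\phi(x_\gamma)\downarrow 0$, so $m(\phi)=0$, and letting $\phi$ range over $(E^\sim_n)^+$ gives $m=0$. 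An order continuous lattice embedding has regular range, so $J(E)$ is a regular sublattice of $M$. Finally, and this is the delicate input, $J$ is isometric: since $E^\sim_n$ is a band of $E^\sim$ that separates the points of $E$, it norms $E$, so $\|Jx\|_M=\|x\|_E$ for every $x$ (this is the precise content of the remark preceding the statement, to the effect that $J$ carries a norm bounded increasing net in $E^+$ to one with the same properties in $M$). I would isolate this norming fact as a lemma or cite it.

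Now let $0\le x_\alpha\uparrow$ be order bounded in $M$. The space $M$ is monotonically complete, being a band in the dual Banach lattice $(E^\sim_n)^\sim$, and $(Jx_\alpha)$ is an increasing, order bounded, hence norm bounded, net in $M$; therefore $y:=\sup_\alpha Jx_\alpha$ exists in $M$, i.e. $Jx_\alpha\uparrow y$. By the isometry, $\|x_\alpha\|_E=\|Jx_\alpha\|_M\le\|y\|_M$, so $(x_\alpha)$ is norm bounded in $E$. From $Jx_\alpha\uparrow y$ we get $Jx_\alpha\stackrel{o}{\to}y$ in $M$, so $(Jx_\alpha)$ is $o$-Cauchy, in particular $uo$-Cauchy, in $M$; since $J(E)$ is a regular sublattice of $M$, \cite[Theorem 3.2]{GTX} transfers this and yields that $(x_\alpha)$ is $uo$-Cauchy in $E$. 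Thus $(x_\alpha)$ is a norm bounded $uo$-Cauchy net in the boundedly $uo$-complete Banach lattice $E$, hence $x_\alpha\stackrel{uo}{\to}x_0$ for some $x_0\in E$. Because the net is increasing, this $uo$-limit is its supremum: fixing $\beta$, the eventually positive net $x_\alpha-x_\beta$ $uo$-converges to $x_0-x_\beta$, so $x_0-x_\beta\ge 0$; hence $x_0$ is an upper bound for $(x_\alpha)$ in $E$, as required.

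The step I expect to be the main obstacle is the isometry of $J$, i.e. showing that a points-separating band of functionals on a Banach lattice is automatically norming, equivalently that order-boundedness of $(Jx_\alpha)$ in $M$ forces norm-boundedness of $(x_\alpha)$ in $E$; everything else is a routine assembly of the Levi property of $M$, the order continuity of $J$, and the regular-sublattice transfer of $uo$-Cauchyness. A secondary point to check carefully is that $J$ is genuinely a Riesz homomorphism onto a regular sublattice (so that \cite[Theorem 3.2]{GTX} applies), but this is standard order-continuous-dual machinery.
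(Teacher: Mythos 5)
Your proof is correct and follows essentially the same route as the paper's: embed $E$ into $(E^\sim_n)^\sim_n$ via the canonical map $J$, use monotone completeness there to produce a supremum of $J(x_\alpha)$, transfer $uo$-Cauchyness back through the regular sublattice $J(E)$, and conclude with bounded $uo$-completeness of $E$. The point you single out as delicate --- that $E^\sim_n$ is norming, so that order boundedness of $J(x_\alpha)$ in $(E^\sim_n)^\sim_n$ forces norm boundedness of $x_\alpha$ in $E$ itself --- is exactly the step the paper covers via Lemma~\ref{b-bounded subset is t-bounded} together with the cited properties of $J$ from \cite{MN}, so the two arguments agree there as well.
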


\begin{proof}\
Since the net $x_\alpha$ is order bounded in $(E^\sim_n)^\sim_n$, it is norm bounded in 
$(E^\sim_n)^\sim_n$ and hence norm bounded in $E$ by Lemma~\ref{b-bounded subset is t-bounded}. 

Let  $J : E \to (E^\sim_n)^\sim_n$ be the natural embedding, where $J(x)(f) = f(x)$ for each $x\in E$
and $f\in E^\sim_n$. The map $J$ is a  vector lattice isomorphism and the range  
$J (E)$ in $(E^\sim_n)^\sim_n$ is order dense  in $(E^\sim_n)^\sim_n$ by \cite[Theorem 1.43]{AB1}. 
Therefore, $J(E)$ is a regular sublattice of $(E^\sim_n)^\sim_n$.

By  \cite[2.4.19]{MN}, $(E^\sim_n)^\sim_n$ is a monotonically complete Banach lattice. 
Thus, the increasing net $J(x_\alpha)$ has a supremum in $(E^\sim_n)^\sim_n$ say $x$. 

So $ J (x_\alpha)\uparrow x$ and $J (x_\alpha)$ is order Cauchy in $(E^\sim_n)^\sim_n$. 
It follows that $J (x_\alpha)$ is $uo$-Cauchy in $(E^\sim_n)^\sim_n$ and in the regular sublattice $J(E)$. 
As $J$ is $1$-$1$ and onto $J (E)$ is lattice isomorphism, $x_\alpha$ is  $uo$-Cauchy in $E$. 
$E$ is boundedly $uo$-complete, $x_\alpha\stackrel{uo}{\to}x_1$  for some $x_1 \in E$.
On the other hand $0 \le x_\alpha\uparrow$, thus $x_\alpha\uparrow x_1$ and $x_\alpha$
is order bounded in $E$.
\end{proof}


{\small 

}
\end{document}